\documentclass[11pt]{article}

\usepackage{amsmath,amsthm,amssymb,amsbsy}

\usepackage{graphicx,caption,subfig}

\usepackage{hyperref}

\usepackage{fullpage}

\usepackage{float}

\usepackage[normalem]{ulem}

\usepackage[colorinlistoftodos]{todonotes}

\usepackage{prettyref}
\newrefformat{def}{Definition \ref{#1}}
\newrefformat{thm}{Theorem \ref{#1}}
\newrefformat{prop}{Proposition \ref{#1}}
\newrefformat{lem}{Lemma \ref{#1}}
\newrefformat{cor}{Corollary \ref{#1}}
\newrefformat{sec}{Section \ref{#1}}
\newrefformat{sub}{Subsection \ref{#1}}
\newrefformat{rem}{Remark \ref{#1}}
\newrefformat{ex}{Example \ref{#1}}
\newrefformat{app}{Appendix \ref{#1}}
\newrefformat{subfig}{Figure \ref{#1}}
\newrefformat{fig}{Figure \ref{#1}}

\numberwithin{equation}{section} \numberwithin{figure}{section}

\theoremstyle{plain} \newtheorem{thm}{\protect\theoremname}[section]
\theoremstyle{definition} \newtheorem{defn}[thm]{\protect\definitionname}
\theoremstyle{plain} \newtheorem{prop}[thm]{\protect\propositionname}
\theoremstyle{plain} \newtheorem{cor}[thm]{\protect\corollaryname}
\theoremstyle{plain} \newtheorem{lem}[thm]{\protect\lemmaname}
\theoremstyle{plain} 
\theoremstyle{remark} \newtheorem{rem}[thm]{\protect\remarkname}

\providecommand{\corollaryname}{Corollary}
\providecommand{\definitionname}{Definition}
\providecommand{\lemmaname}{Lemma}
\providecommand{\propositionname}{Proposition}
\providecommand{\theoremname}{Theorem}
\providecommand{\examplename}{Example}
\providecommand{\remarkname}{Remark}

\setlength{\parindent}{0pt}
\setlength{\parskip}{0.5em}

\global\long\def\xvec{\boldsymbol{x}}
\global\long\def\cvec{\boldsymbol{c}}
\global\long\def\VV{\mathbf{V}}
\global\long\def\O{\Omega}
\newcommand{\w}{\omega}
\newcommand{\CC}{\mathbb{C}}
\newcommand{\NN}{\mathbb{N}}
\newcommand{\RR}{\mathbb{R}}
\newcommand{\GG}{\mathbf{G}}
\newcommand{\DD}{\mathcal{D}}
\newcommand{\srf}{\ensuremath{\mathrm{SRF}}}
\newcommand{\xiv}{\boldsymbol{\xi}}

\newcommand{\tab}{\hspace*{2em}}

\newcommand{\grd}[1][\Delta]{\ensuremath{{\cal T}_{#1}}}
\newcommand{\grdM}{{\cal R}}
\newcommand{\minmax}{{\cal E}}

\DeclareMathOperator{\sinc}{sinc}
\DeclareMathOperator{\diag}{diag}
\DeclareMathOperator{\Arg}{Arg}
\DeclareMathOperator{\supp}{supp}

\usepackage{constants}

\usepackage{authblk}

\begin{document}

\title{Conditioning of partial nonuniform Fourier matrices with
  clustered nodes\thanks{The research of DB and LD is supported in
    part by AFOSR grant FA9550-17-1-0316, NSF grant DMS-1255203, and a
    grant from the MIT-Skolkovo initiative. The research of GG and YY
    is supported in part by the Minerva Foundation.} }

\author[1]{Dmitry Batenkov} \author[1]{Laurent Demanet}
\author[2]{Gil Goldman} \author[2]{Yosef Yomdin}

\affil[1]{Department of Mathematics, Massachusetts Institute of
  Technology, Cambridge, MA 02139, USA} \affil[2]{Department of
  Mathematics, Weizmann Institute of Science, Rehovot 76100, Israel}

\affil[ ]{\textit{\{batenkov,ldemanet\}@mit.edu,
  \{gil.goldman,yosef.yomdin\}@weizmann.ac.il}}


\maketitle

\begin{abstract}
  We prove sharp lower bounds for the smallest singular value of a
  partial Fourier matrix with arbitrary ``off the grid'' nodes
  (equivalently, a rectangular Vandermonde matrix with the nodes on
  the unit circle), in the case when some of the nodes are separated
  by less than the inverse bandwidth. The bound is polynomial in the
  reciprocal of the so-called ``super-resolution factor'', while the
  exponent is controlled by the maximal number of nodes which are
  clustered together. As a corollary, we obtain sharp minimax bounds
  for the problem of sparse super-resolution on a grid under the
  partial clustering assumptions.
\end{abstract}

{\bf Keywords:} {\it Vandermonde matrix with nodes on the unit circle,
  prolate matrix, partial Fourier matrix, super-resolution, singular
  values, decimation}

  {\bf AMS 2010 Subject Classification:} {{\it Primary:} 15A18; 
    {\it Secondary:} 42A82, 65F22, 94A12 }
\section{Introduction}
\label{sec:intro}

Vandermonde matrices and their spectral properties are of considerable
interest in several fields, such as polynomial interpolation,
approximation theory, numerical analysis, applied harmonic analysis,
line spectrum estimation, exponential data fitting and others
(e.g. \cite{aubel_vandermonde_2017,batenkov_complete_2015,bazan_conditioning_2000,pereyra2010,peter_nonlinear_2011,reynolds_generalized_2013}
and references therein). Motivated by questions related to the
so-called problem of super-resolution (more on this in
\prettyref{sub:discussion} below), in this paper we study the
conditioning of rectangular Vandermonde matrices $\VV$ with
irregularly spaced nodes on the unit circle, where the number of nodes
$s$ is considered to be relatively small and fixed, while the
polynomial degree $N\geq s$ can be large. This question has received
much attention in the literature, see
e.g. \cite{aubel_vandermonde_2017,bazan_conditioning_2000,liao_music_2016,moitra_super-resolution_2015,ferreira_superresolution_1999,li_stable_2017,batenkov_stability_2016,demanet_recoverability_2014}. Normalizing
the matrix by ${1\over\sqrt{N}}$, the magnitude of the largest singular value is
$O(\sqrt{s})$, and so studying the scaling of the condition number is
equivalent to estimating the smallest singular value. As long as the
nodes are separated by at least ${1\over N}$, the matrix $\VV$ is
known to be well-conditioned. However, as the nodes collide, the
columns of $\VV$ become increasingly correlated and therefore the
smallest singular value becomes very small, while the condition number
blows up.

In this paper we show (see \prettyref{sec:main-result-intro}) that if
the nodes are separated by $\Delta \ll {1\over N}$, then under certain
technical conditions the smallest singular value of $\VV$ scales with
the asymptotically tight rate $\asymp \left(N\Delta\right)^{\ell-1}$,
where $\ell \leq s$ is the maximal number of nodes which form a small
``cluster'' (i.e. a group of at most $\ell$ nodes which are separated below
$\sim {1\over N}$, see \prettyref{def:partial-cluster}). This improves
upon previous known results
\cite{demanet_recoverability_2014,li_stable_2017} which established
this scaling for the extreme case $\ell=s$, and a recent preprint
\cite{kunis2018} which deals with the special case $\ell=2$. During
the review of the present paper, the authors of \cite{li_stable_2017}
improved their analysis to the general case $\ell \leq s$, and we
compare their results to ours in \prettyref{rem:compare-with-LL}
below.

The above bounds follow from the solution of the ``continuous''
version of the problem, where the row index becomes a continuous
``frequency'' variable $\w \in \left[-\O,\O\right]$, so that the
bandwidth $\O$ effectively plays the role of $N$. In the continuous
setting, we establish tight bounds for the smallest eigenvalue of the
corresponding Gramian matrix $\GG$ with irregularly spaced nodes,
which generalizes well-known results due to Slepian
\cite{slepian_prolate_1978} for the prolate matrix (which, in turn,
plays a prominent role in the seminal study of the spectral
concentration problem \cite{slepian_comments_1983}). In fact this
continuous version is what originally appeared in the studies of the
super-resolution of sparse atomic measures in
\cite{donoho_superresolution_1992} and later
\cite{demanet_recoverability_2014}, and we use our results to derive
minimax bounds for this problem in \prettyref{sub:discussion}.

The paper is organized as follows. In \prettyref{sec:known-bounds} we
provide the definitions and review known bounds for singular values of
rectangular Vandermonde matrices. In \prettyref{sec:main-results} we
state the definition for clustered configurations, and formulate the
main results regarding the smallest eigenvalue of the Gramian matrix
$\GG$, smallest singular value of the corresponding Vandermonde matrix
$\VV$ and the novel minimax bound for the problem of super-resolution
of point sources on the grid. In \prettyref{sec:mainsteps} we prove
the main results, and in \prettyref{sec:Numerical-evidence} we present
numerical evidence confirming our bounds.

\section{Preliminaries}
\label{sec:known-bounds}

\subsection{Notation}

\begin{defn}
  For $N\in\NN$ and vector $\xiv=\left(\xi_1,\dots,\xi_s\right)$ of
  pairwise distinct real nodes $\xi_j\in\left(-\pi,\pi\right]$, we define the
  rectangular $\left(2N+1\right)\times s$ Vandermonde matrix
  $\VV_N\left(\xiv\right)$ as
\begin{equation}\label{eq:vand-def-2}
\VV_N\left(\xiv\right):={1\over\sqrt{2N}} \bigl[ \exp\left(\imath k \xi_j\right)\bigr]_{k=-N,\dots,N}^{j=1,\dots,s}.
\end{equation}
\end{defn}

In many applications of interest, the columns of $\VV_N$ as above
arise from sampling the exponential functions
$\left\{\exp\left(\imath\w t_j\right)\right\}_{j=1}^s$ at equispaced
points $\w_k = {k\over N}\O,\;|k|\leq N$, where $\O>0$ is a quantity
which is frequently called the bandlimit or bandwidth, and the nodes
$\{t_j:={N\xi_j\over\O}\}$ represent some relevant physical
parameters, such as angles of arrival, locations of point sources
etc. Therefore, in these cases it is more natural to regard $\{t_j\}$
and $\O$ as the primary variables instead of $\{\xi_j\}$ and $N$,
while in fact thinking about the scenario where $N$ can be very
large. According with this philosophy, we shall be primarily
interested in the continuous limit $N\to\infty$.

\begin{defn}\label{def:vn-rescaled-def}
  For $N\in\NN$, $s\in\NN$, $\xvec$ a vector of $s$ distinct nodes
  $\xvec:=\left(t_1,\dots,t_s\right)$ with
  $t_j\in\left(-{\pi\over 2},{\pi\over 2}\right]$, and bandwidth
  parameter $\O>0$, denote by $\VV_N\left(\xvec,\O\right)$ the
  rectangular $(2N+1)\times s$ Vandermonde matrix with complex nodes
  $z_{j,N}=\exp\left(\imath \xi_{j,N}\right)$ where
  $\xi_{j,N}= \frac{t_j\O}{N}$, i.e.
\begin{equation}
  \label{eq:discretized-vandermonde}
  \VV_{N}\left(\xvec,\O\right):=\VV_N\left({\O\over N}\xvec\right)={1\over\sqrt{2N}}\left[\exp\left(\imath k \frac{t_j\O}{N}\right)\right]_{k=-N,\dots,N}^{j=1,\dots,s}.
\end{equation}
\end{defn}

With the above definition, the Gramian matrix
$\VV_N\left(\xvec,\O\right)^H \VV_N\left(\xvec,\O\right)$ becomes in
the limit $N\to\infty$ the kernel matrix with respect to the
well-known $\sinc$ kernel.

\begin{defn}
For $N\in\NN$, the Dirichlet (periodic sinc) kernel of order $N$ is
$$
\DD_N\left(t\right):=\sum_{k=-N}^N \exp\left(\imath k t\right)=
\begin{cases}
\frac{\sin\left((N+{1\over 2})t\right)}{\sin{t\over 2}} & t \neq 0, \\
2N + 1 & \text{else}.
\end{cases}
$$
\end{defn}

\begin{defn}
  For $N\in\NN$, and $\xvec,\O$ as in \prettyref{def:vn-rescaled-def}, let
  $\GG_N$ be the $s\times s$ matrix
  $$
  \GG_N\left(\xvec,\O\right):=\VV_N\left(\xvec,\O\right)^H
  \VV_N\left(\xvec,\O\right) = {1\over
    {2N}}\left[\DD_N\left({\O\left(t_i-t_j\right)\over
        N}\right)\right]_{i,j}.
$$
\end{defn}

\begin{defn}
Let the $\sinc$ function be defined by
$$
\sinc(t) := {1\over 2} \int_{-1}^1 \exp(\imath\w t) d\w = \begin{cases}
  \frac{\sin t}{t} & t \neq 0, \\
  1 & \text{else}.
\end{cases}
$$
\end{defn}
\begin{defn}\label{def:prolate}
  For $s\in\NN$, $\xvec$ a vector of $s$ distinct nodes
  $\xvec:=\left(t_1,\dots,t_s\right)$ with
  $t_j\in\left(-{\pi\over 2},{\pi\over 2}\right]$, and bandwidth
  parameter $\O>0$, let $\GG\left(\xvec,\O\right)$ denote the
  $s\times s$ matrix
\begin{equation}
  \label{eq:sinc-matrix}
  \GG\left(\xvec,\O\right):=\Biggl[ \sinc \left(\O \left(t_i-t_j\right)\right)\Biggr]_{1\leq i,j\leq s}.
\end{equation}
\end{defn}
\begin{prop}
  For $\xvec$ a vector of pairwise distinct nodes, the matrix
  $\GG\left(\xvec,\O\right)$ is positive definite.
\end{prop}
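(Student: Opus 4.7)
The plan is to exhibit $\GG(\xvec,\O)$ as the Gram matrix of $s$ linearly independent functions in $L^{2}[-1,1]$, which makes positive definiteness automatic.

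First, I would unpack the definition of $\sinc$ to write each entry as an inner product:
$$
\sinc\bigl(\O(t_i-t_j)\bigr)=\frac{1}{2}\int_{-1}^{1}\exp\bigl(\imath\w\O(t_i-t_j)\bigr)\,d\w = \frac{1}{2}\int_{-1}^{1}\exp(\imath\w\O t_i)\,\overline{\exp(\imath\w\O t_j)}\,d\w.
$$
Then, for an arbitrary $\cvec=(c_1,\dots,c_s)\in\CC^{s}$, interchanging the finite sum with the integral gives
$$
\cvec^{H}\GG(\xvec,\O)\,\cvec \;=\; \frac{1}{2}\int_{-1}^{1}\Biggl|\,\sum_{j=1}^{s}c_{j}\exp(-\imath\w\O t_j)\Biggr|^{2}d\w \;\geq\; 0,
$$
so $\GG(\xvec,\O)$ is at least positive semi-definite.

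Next, I would upgrade this to strict positivity. Suppose the displayed integral is zero for some $\cvec\neq 0$. Then the function $f(\w):=\sum_{j=1}^{s}c_{j}\exp(-\imath\w\O t_j)$ vanishes almost everywhere on $[-1,1]$, hence identically on that interval by continuity. But $f$ extends to an entire function on $\CC$, so by the identity theorem $f\equiv 0$ on all of $\CC$. The pairwise distinct values $t_1,\dots,t_s$ make the exponentials $\exp(-\imath\w\O t_j)$ linearly independent (this is standard: differentiate $f$ repeatedly at $\w=0$ to obtain a Vandermonde system in the $-\imath\O t_j$, which is invertible since the nodes are distinct). Therefore $c_j=0$ for all $j$, contradicting $\cvec\neq 0$.

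There is no real obstacle here; the proof is a textbook positivity argument for band-limited kernels, with the only point demanding care being the linear independence of the exponentials, handled by the Vandermonde observation above. An alternative phrasing that avoids invoking analytic continuation is to appeal directly to Plancherel/Paley--Wiener and note that $f$ is the Fourier transform of the measure $\sum_j c_j\delta_{\O t_j}$ restricted to $[-1,1]$, but for the present purpose the entire-function argument is shortest.
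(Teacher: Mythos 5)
Your proof is correct and follows essentially the same route as the paper's: recognize $\GG$ as the Gram matrix of the exponentials $\exp(\imath t_j\w)$ over a symmetric interval and conclude strict positivity from the non-vanishing of the exponential sum. The only difference is that you supply the justification (analytic continuation plus Vandermonde/linear independence of exponentials) for the step $f\not\equiv 0$ when $\cvec\neq 0$, which the paper states without elaboration.
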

\begin{proof}
The matrix $\GG$ is nothing but the Gramian matrix of the functions
$\left\{\exp(\imath t_j \w)\right\}_{j=1,\dots,s}$ with the inner
product
$\left\langle f,g\right\rangle_{\O}:={1\over{2\O}} \int_{-\O}^\O
f(\w)\overline{g(\w)}d\w$. For any $\xvec$ as above and nonzero
$\cvec=\left(c_1,\dots,c_s\right) \in \CC^s$ define
$f_{\xvec,\cvec}(\w):=\sum_{j=1}^s c_j \exp(\imath t_j \w) \not\equiv 0$, then we
have
$\|\GG\left(\xvec,\O\right) \cvec \|_2^2 = \left\langle
  f_{\xvec,\cvec},f_{\xvec,\cvec}\right\rangle_{\O} > 0$.  
\end{proof}

For any matrix $\GG \in \CC^{s \times s}$, and a matrix $\VV \in \CC^{N \times s}$ with $N \ge s$, 
we denote as usual
\begin{align*}
  \lambda_{\min}(\GG) &:= \text{The minimal eigenvalue of } \GG;\\ 
  \sigma_{\min}(\VV)  &:= \sqrt{\lambda_{\min}(\VV^H \VV)}.
\end{align*}  

\begin{prop}
  With the above definitions, we have
\begin{equation}\label{eq:sing-eigen-lim}
  \lambda_{\min}\left(\GG\left(\xvec,\O\right)\right)=\lim_{N\to\infty} \lambda_{\min}\left(\GG_N\left(\xvec,\O\right)\right)=\lim_{N\to\infty} \sigma^2_{\min}\left(\VV_N\left(\xvec,\O\right)\right).
\end{equation}
\end{prop}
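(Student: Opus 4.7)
The second equality is tautological: by construction $\GG_N(\xvec,\O)=\VV_N(\xvec,\O)^H \VV_N(\xvec,\O)$, so $\lambda_{\min}(\GG_N)=\sigma^2_{\min}(\VV_N)$ for every $N$. Hence the real content is the first equality, and my plan is to establish it by entrywise convergence $\GG_N(\xvec,\O)\to \GG(\xvec,\O)$ together with the continuity of eigenvalues as functions of the matrix entries.

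For the entrywise convergence, fix indices $i,j$ and set $u_N:=\O(t_i-t_j)/N$. When $i=j$ we have $u_N=0$ and $\DD_N(0)=2N+1$, so
\[
[\GG_N]_{ii}={2N+1\over 2N}\;\longrightarrow\;1=\sinc(0)=[\GG]_{ii}.
\]
When $i\neq j$, $u_N\to 0$ as $N\to\infty$, and using $\DD_N(u)=\sin\bigl((N+\tfrac12)u\bigr)/\sin(u/2)$ we get
\[
{1\over 2N}\DD_N(u_N)={1\over 2N}\cdot{\sin\bigl((N+\tfrac12)\O(t_i-t_j)/N\bigr)\over \sin\bigl(\O(t_i-t_j)/(2N)\bigr)}.
\]
The numerator converges to $\sin\bigl(\O(t_i-t_j)\bigr)$, while the denominator is asymptotic to $\O(t_i-t_j)/(2N)$, so the ratio divided by $2N$ tends to $\sin\bigl(\O(t_i-t_j)\bigr)/\bigl(\O(t_i-t_j)\bigr)=\sinc(\O(t_i-t_j))=[\GG]_{ij}$. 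Thus every entry of $\GG_N(\xvec,\O)$ converges to the corresponding entry of $\GG(\xvec,\O)$.

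To conclude, I invoke the standard fact that the eigenvalues of a Hermitian matrix depend continuously on its entries (e.g., a consequence of the Hoffman--Wielandt inequality or of the min--max characterization). Since $\GG_N\to\GG$ entrywise in $\CC^{s\times s}$ with $s$ fixed, we have $\lambda_{\min}(\GG_N(\xvec,\O))\to\lambda_{\min}(\GG(\xvec,\O))$, which combined with the tautological identity above proves \eqref{eq:sing-eigen-lim}.

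There is no genuine obstacle here; the only mild point of care is handling the diagonal entries separately from the off-diagonal ones, since the formula for $\DD_N$ has a removable singularity at the origin. Everything else is a routine limit computation together with the continuity of spectra in the fixed finite dimension~$s$.
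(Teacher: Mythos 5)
Your proof is correct and takes essentially the same route as the paper: establish entrywise convergence $\GG_N\to\GG$ and then invoke continuity of eigenvalues in fixed dimension $s$, together with the tautology $\GG_N=\VV_N^H\VV_N$. The only cosmetic difference is that you compute the entrywise limit directly from the closed form of the Dirichlet kernel, whereas the paper identifies $\frac{1}{2N}\DD_N(\O t/N)$ as a Riemann sum for $\frac{1}{2}\int_{-1}^1 e^{\imath\w t}\,d\w=\sinc(\O t)$; both yield the same limit.
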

\begin{proof}
Approximating the integrals by the Riemann sums, we have that
\begin{equation*}
  \sinc\left(\O t\right)=\lim_{N\to\infty}\frac{1}{2N}\sum_{k=-N}^N \exp\left( \imath \frac{k}{N}\Omega t\right)=\lim_{N\to\infty} {1\over{2N}} \DD_N \left({\O t\over N}\right),
\end{equation*}
and therefore
$\GG\left(\xvec,\O\right) = \lim_{N\to\infty}
\GG_N\left(\xvec,\O\right)$. By definition $\VV_N^H\VV_N=\GG_N$, and
so by continuity of eigenvalues \cite[Section 2.4.9]{horn_matrix_2012}
we conclude that \eqref{eq:sing-eigen-lim} holds.
\end{proof}

The main subject of the paper is the scaling of the smallest
eigenvalue of $\GG$ and the smallest singular value of $\VV_N$, when
some of the nodes of $\xvec$ nearly collide (become very close to each
other).

\begin{defn}[Wrap-around distance]
For $t \in {\mathbb R}$, we denote
$$
	\|t\|_{\mathbb{T}}:=\left| \Arg \exp\left(\imath t\right) \right|= \biggl|t \mod \left(-\pi,\pi\right]\biggr|,
$$
where $\Arg(z)$ is the principal value of the argument of $z\in
{\mathbb C} \backslash \{0\}$, taking values in $\left(-\pi,\pi\right]$.
\end{defn}

\begin{defn}[Minimal separation]
  Given a vector of $s$ distinct nodes
  $\xvec:=\left(t_1,\dots,t_s\right)$ with
  $t_j\in\left(-{\pi\over 2},{\pi\over 2}\right]$, we define the
  minimal separation (in the wrap-around sense) as
$$
\Delta=\Delta\left(\xvec\right):=\min_{i\neq j}\|t_i-t_j\|_{\mathbb{T}}.
$$
\end{defn}

\subsection{Known bounds}

Let $\VV_N$ be as defined in \eqref{eq:discretized-vandermonde},
i.e. a rectangular Vandermonde matrix with nodes
$z_{j,N}=\exp\left(\imath \xi_{j,N}\right)$ on the unit circle with
$\xi_{j,N}=t_j {\O\over N}$, $j=1,\ldots,s$. Denote
$\Delta_N:=\min_{i\neq j}|\xi_{i,N}-\xi_{j,N}|$.

Several more or less equivalent bounds on
$\sigma_{\min}\left(\VV_N\right)$ are available in the
``well-separated'' case $N\Delta_N > const$, using various results
from analysis and number theory such as Ingham and Hilbert
inequalities, large sieve inequalities and Selberg's majorants
\cite{ingham_trigonometrical_1936,moitra_super-resolution_2015,negreanu_discrete_2006,aubel_vandermonde_2017,montgomery_ten_1994,montgomery_hilberts_1974,ferreira_superresolution_1999,bazan_conditioning_2000}.

The tightest bound was obtained in \cite{aubel_vandermonde_2017}
(slightly improving Moitra's bound from
\cite{moitra_super-resolution_2015}), where it was shown that (in our
notations we substitute $N \rightarrow 2N+1$) if
$2N+1 > {2\pi\over \Delta_{N}}$ then
$$
\sigma_{\min}\left(\sqrt{2N}\VV_N\right) \geq \sqrt{2N+1-{2\pi\over\Delta_N}}.
$$

In our setting, we have $\Delta_N={\Delta\O\over N}$ and so as
$N\to\infty$ we obtain, assuming $\Delta\O \geq \pi$, that
\begin{equation*}
  \sigma_{\min}\left(\VV_N\right) \geq \sqrt{1+{1\over {2N}}-{2\pi\over {2N\Delta_N}}} \to \sqrt{1-{\pi\over\O\Delta}}.
\end{equation*}

The case $\Delta \O \ll 1$, or, equivalently,
$\min_{i\neq j}|\xi_{i,N}-\xi_{j,N}|\ll{1\over N}$, turns out to be much
more difficult to analyze. All known results provide sharp bounds only
in the particular case when all the nodes are clustered together, or
approximately equispaced.

If all the nodes $t_j$ are equispaced, say $t_j=t_0+j\Delta,\;j=1,\dots,s$, then the
matrix $\GG$ is the so-called \emph{prolate matrix}, whose spectral
properties are known exactly
\cite{varah_prolate_1993,slepian_prolate_1978}. Indeed, we have in
this case
$$
\GG_{i,j}=\frac{\sin\left(\O\left(t_i-t_j\right)\right)}{\O(t_i-t_j)}=\frac{\sin\left(\O\Delta\left(i-j\right)\right)}{\O\Delta\left(i-j\right)}=\frac{\pi}{\O\Delta} \cdot \frac{\sin\left(2\pi W\left(i-j\right)\right)}{\pi\left(i-j\right)},\quad W:=\frac{\O\Delta}{2\pi},
$$
and therefore $\GG=\frac{\pi}{\O\Delta}\boldsymbol{Q}(s,W)$ where
$\boldsymbol{Q}(s,W)$ is the matrix defined in
\cite[eq. (21)]{slepian_prolate_1978}. The smallest eigenvalue of
$\boldsymbol{Q}(s,W)$, denoted by $\lambda_{s-1}(s,W)$ in the same paper, has
the exact asymptotics for $W$ small, given in
\cite[eqs. (64,65)]{slepian_prolate_1978}:
\begin{equation}\label{eq:slepian-explicit-bound}
  \lambda_{s-1}\left(s,W\right)={1\over\pi}\left(2\pi W\right)^{2s-1} \Cl{slepian}(s) \left(1+O\left(W\right)\right),\quad \Cr{slepian}(s):=\frac{2^{2s-2}}{\left(2s-1\right){{2s-2} \choose {s-1}}^3},
\end{equation}
which gives
$$
\lambda_{\min}\left(\GG\right)=\Cr{slepian}\left(s\right)\left(\O\Delta\right)^{2s-2} \left(1+O\left(\O\Delta\right)\right),\quad \O\Delta\ll 1.
$$

The same scaling was shown using Szego's theory of Toeplitz forms in
\cite{demanet_recoverability_2014} -- see also
\prettyref{sub:discussion}. The authors showed that there exist $C>0$
and $y^*>0$ such that for $\O\Delta<y^*$
$$
{C\over 16} \left(\sin{2\O\Delta\over \pi}\right)^{2s-2} \leq \lambda_{\min}\left(\GG\right)\leq 16 \left(\sin{2\O\Delta\over \pi}\right)^{2s-2}.
$$

To conclude the above discussion, defining the ``super-resolution factor'' as
$$
\srf:={\pi\over \Delta \O},
$$
we have that
\begin{align}
  \label{eq:separated-bound}
  \lambda_{\min}\left(\GG\right) & \approx \left(1-\srf\right),\quad \srf \leq 1; \\
    \label{eq:slepian-bound}
  \lambda_{\min}\left(\GG\right) &\approx \srf^{-2(s-1)},\quad\srf \gg 1.
\end{align}

\section{Main results}
\label{sec:main-results}

\subsection{Optimal bounds for the smallest eigenvalue}
\label{sec:main-result-intro}

It turns out that the bound \eqref{eq:slepian-bound} is too
pessimistic if only some of the nodes are known to be
clustered. Consider for instance the configuration
$\xvec=\left(t_1=\Delta,\; t_2=2\Delta,\;t_3= -\frac{\pi}{4}\right)$,
then, as can be seen in \prettyref{fig:sigma.min.first.simulation}, we
have in fact
$\lambda_{\min}\left(\GG\left(\xvec,\O\right)\right) \approx
\left(\Delta \O\right)^2$, decaying much slower than $(\Delta \O)^4$
-- which would be the bound given by \eqref{eq:slepian-bound}.

\begin{figure}[t]
  \centering \subfloat[Schematic representation of
  $\xvec$.]{\includegraphics[width=0.45\linewidth]{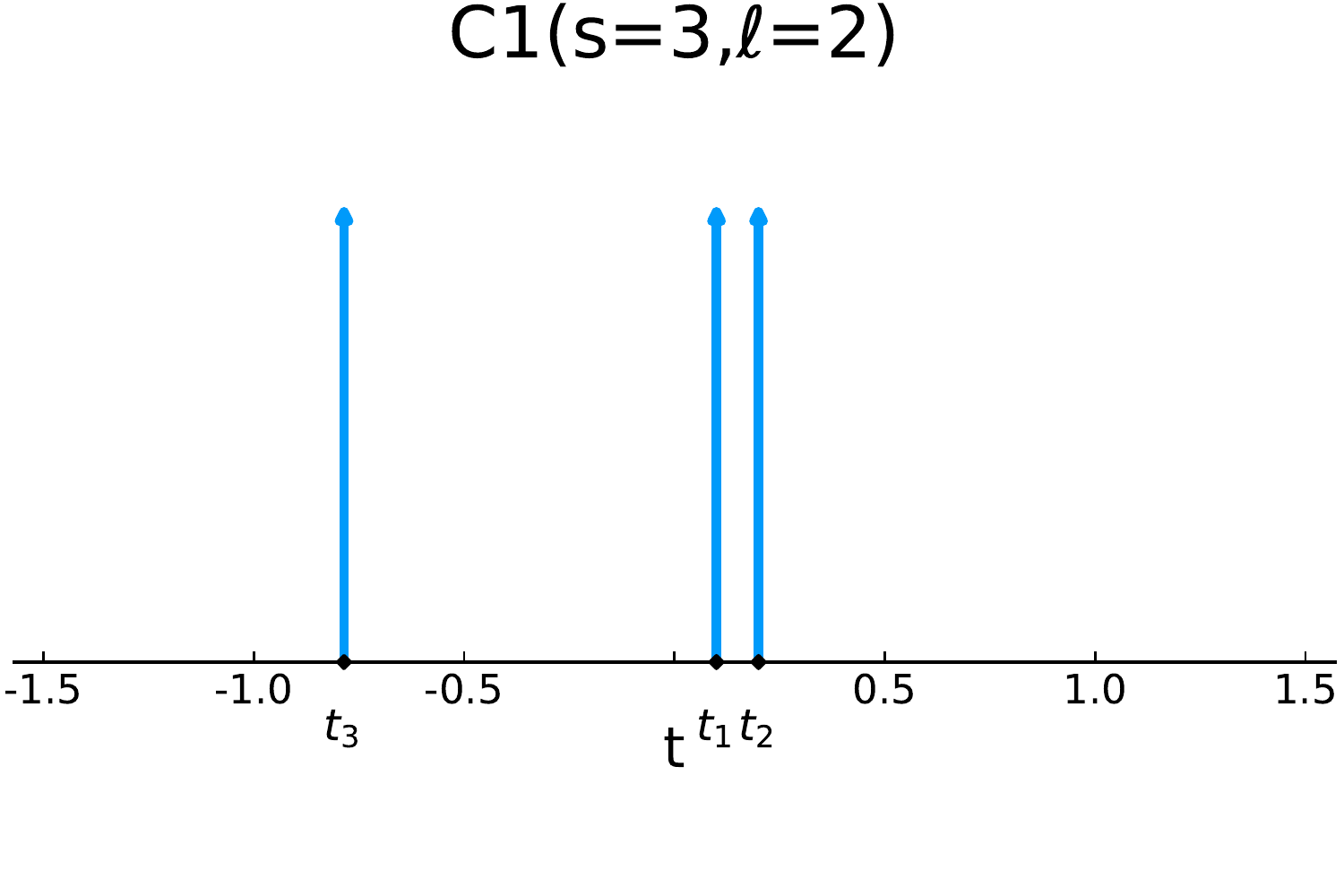}\label{subfig:demo1-conf}}
  \subfloat[The decay of
  $\lambda_{\min}$.]{\includegraphics[width=0.45\linewidth]{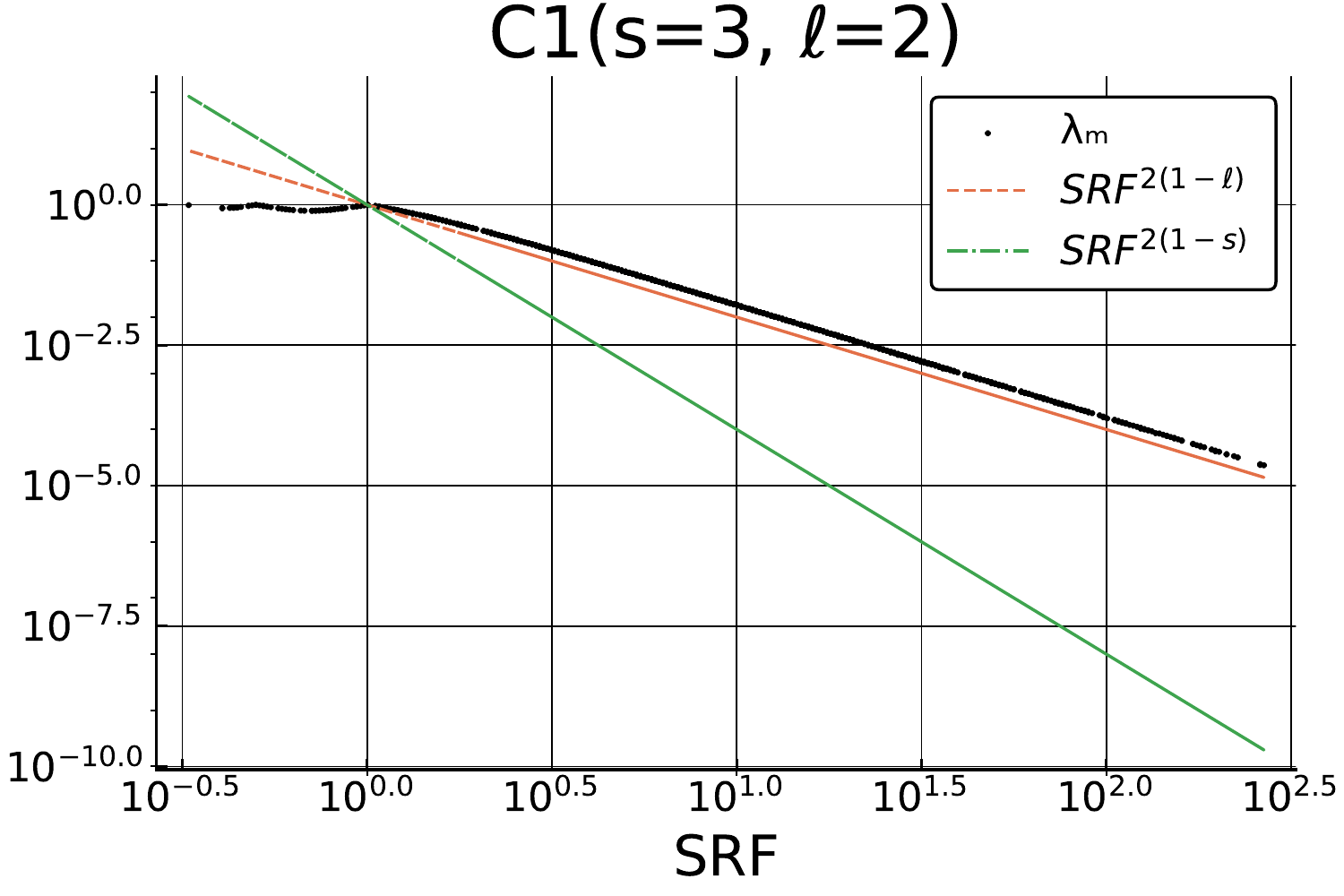}\label{subfig:demo1}}
  \caption{\small For different values of $\Delta,\O$ we plot the
    quantity
    $\lambda_{m}=\lambda_{\min}\left(\GG\left(\xvec,\O\right)\right)$
    versus the super-resolution factor $\srf=\frac{\pi}{\Delta
      \O}$. {\bf (a)}
    $\xvec=\left(t_1=\Delta,\; t_2=2\Delta,\;t_3=
      -\frac{\pi}{4}\right)$ is a single cluster with $s=3$ and
    $\ell=2$. {\bf (b)} The correct scaling is seen to be
    $\lambda_{m}\sim \left(\Delta\O\right)^{2\left(\ell-1\right)}$
    rather than
    $\lambda_{m}\sim \left(\Delta\O\right)^{2\left(s-1\right)}$.  See
    \prettyref{sec:Numerical-evidence} for further details regarding
    the experimental setup. The relationship breaks when
    $\srf\leq O(1)$, consistent with \eqref{eq:separated-bound}. }
 \label{fig:sigma.min.first.simulation}
\end{figure}

In this paper we bridge this theoretical gap. We consider the
\emph{partially clustered regime} where at most $2\leq\ell\leq s$
neighboring nodes can form a cluster (there can be several such
clusters), with two additional parameters $\rho,\tau,$ controlling the
distance between the clusters and the uniformity of the distribution
of nodes within the clusters.

\begin{figure}
  \centering
  \includegraphics[width=\linewidth]{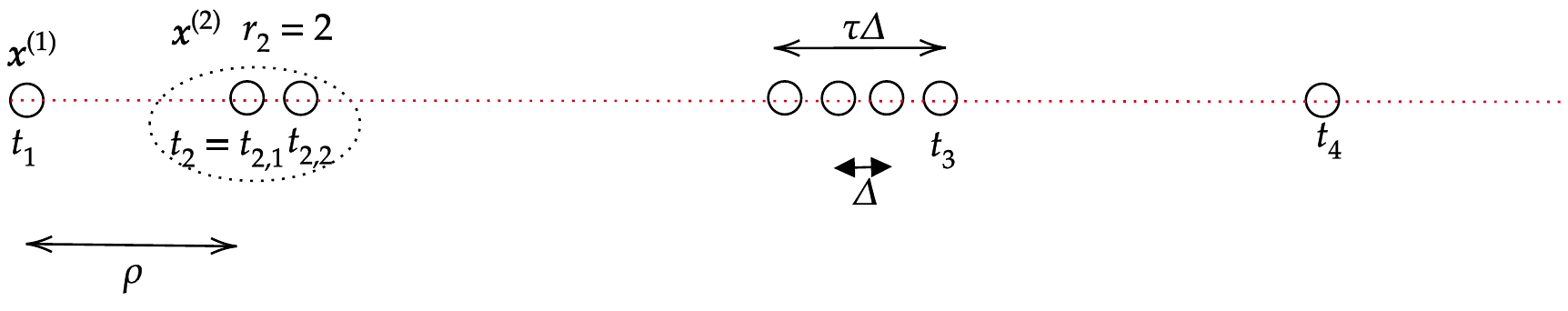}
  \caption{\small The schematic representation of a cluster configuration
    according to \prettyref{def:partial-cluster}. Here $s=8$ and
    $\ell=4$. Each node $t_j$ defines its ``cluster'' $\xvec^{(j)}$ of
    size $r_j \leq \ell$. $\rho$ is the minimal distance from any node
    $t_j$ to another node $y$ not in $\xvec^{(j)}$. The distance
    between any two nodes in $\xvec^{(j)}$ is between $\Delta$ and
    $\tau\Delta$.}
  \label{fig:clusters-conf-def}
\end{figure}

\begin{defn}
  \label{def:partial-cluster}
  The node vector
  $\xvec=\left(t_1,\dots,t_s\right) \subset
  (-\frac{\pi}{2},\frac{\pi}{2}]$ is said to form a
  $\left(\Delta,\rho,s,\ell,\tau\right)$-clustered configuration for
  some $\Delta>0$, $2\leq \ell\leq s$, $\ell-1\leq\tau<{\pi\over\Delta}$ and
  $\rho \ge 0$, if for each $t_j$, there exist at most $\ell$ distinct
  nodes
  $$
  \xvec^{(j)}=\{t_{j,k}\}_{k=1,\dots,r_j} \subset \xvec,\;1\leq
  r_j\leq\ell,\quad t_{j,1}\equiv t_j,
  $$
  such that the following conditions are satisfied:
  \begin{enumerate}
  \item For any $y\in\xvec^{(j)}\setminus\{t_j\}$, we have
    $$
    \Delta\leq \|y-t_j\|_{\mathbb{T}} \leq \tau \Delta.
    $$
  \item For any $y\in\xvec\setminus\xvec^{(j)}$, we have
    $$
    \|y-t_j\|_{\mathbb{T}} \geq \rho. 
    $$
  \end{enumerate}
\end{defn}

The different parameters are illustrated in
\prettyref{fig:clusters-conf-def}.

Our main result is the following generalization of
\eqref{eq:slepian-bound} for clustered configurations.
\begin{thm}
  \label{thm:main-theorem}
  There exists a constant $\Cl{main-c-n}=\Cr{main-c-n}\left(s\right)$
  such that for any $4\tau \Delta \le \rho$, any $\xvec$ forming a
  $\left(\Delta,\rho,s,\ell,\tau\right)$-clustered configuration, and
  any $\O$ satisfying
  \begin{equation}\label{eq:Omega-req-th1}
    \frac{4\pi s}{\rho} \le \O \le{\pi s\over{\tau\Delta}},
  \end{equation}
  we have
  \begin{align}
      \label{eq:main-bound-finite-n-g}
      \sigma_{\min}\left(\VV_N\left(\xvec,\O\right)\right) &\geq
                                                             \Cr{main-c-n} \cdot \left(\Delta \O\right)^{\ell-1},\qquad \text{ whenever } N > 2s^3 \left\lceil \frac{\O}{4s}\right\rceil;\\
      \label{eq:main-bound-thm}
      \lambda_{\min}\left(\GG\left(\xvec,\O\right)\right) &\geq \Cr{main-c-n}^2 \cdot \left(\Delta \O\right)^{2\left(\ell-1\right)}.
    \end{align}
\end{thm}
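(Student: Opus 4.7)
The plan is to prove the continuous Gramian bound \eqref{eq:main-bound-thm} first, and then transfer it to the discrete singular value bound \eqref{eq:main-bound-finite-n-g}. The continuous bound is pursued via the variational characterization
\begin{equation*}
\lambda_{\min}\bigl(\GG(\xvec,\O)\bigr) \;=\; \min_{\|\cvec\|_2=1}\frac{1}{2\O}\int_{-\O}^\O\Bigl|\sum_{j=1}^s c_j e^{\imath t_j\w}\Bigr|^2 d\w,
\end{equation*}
so the task becomes to lower-bound the $L^2_\O$-norm of the exponential sum uniformly over unit $\cvec$.

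The first step is to decouple the clusters. I would partition $\xvec$ into clusters $\Xi_1,\ldots,\Xi_K$ of sizes $r_k\leq\ell$, pairwise separated by at least $\rho$, and split $\cvec$ correspondingly into blocks $\cvec^{(k)}$. Using $|\sinc(\O t)|\leq 1/(\O|t|)$ together with $\O\rho\geq 4\pi s$ bounds the off-diagonal blocks of $\GG$ entrywise by $O(1/s)$. The goal is to convert this into a block-diagonal reduction
\[
\lambda_{\min}(\GG(\xvec,\O))\;\gtrsim\;\min_k \lambda_{\min}(\GG(\Xi_k,\O)),
\]
after which it remains to prove the single-cluster bound $\lambda_{\min}(\GG(\Xi_k,\O))\gtrsim (\O\Delta)^{2(r_k-1)}$. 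For the latter, rescale $u=\w/\O$ to reduce to lower-bounding $\tfrac12\int_{-1}^1|\sum_j c_j e^{\imath\tilde t_j u}|^2 du$ for $\tilde t_j=\O t_j$ lying in a window of length $\tau\O\Delta\leq\pi s$. Taylor-expanding the exponential sum as $\sum_{m\geq 0}(\imath u)^m\mu_m/m!$ with moments $\mu_m=\sum_j c_j\tilde t_j^m$, the first $r$ moments form the vector $V\cvec^{(k)}$ for the square Vandermonde $V_{mj}=\tilde t_j^m$; combining a suitable lower bound for $\sigma_{\min}(V)$ with the positive-definiteness of the Gram matrix of $\{u^m\}_{m<r}$ on $[-1,1]$ yields the desired lower bound for the polynomial head of the Taylor series, while the tail is controlled using $\tau\O\Delta\leq\pi s$.

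Finally, \eqref{eq:main-bound-finite-n-g} follows from $\sigma_{\min}^2(\VV_N)=\lambda_{\min}(\GG_N)$ together with a quantitative Riemann-sum estimate of $\|\GG_N-\GG\|$ sharp enough, in the regime of the hypothesis on $N$, to keep the perturbation below $\tfrac12\lambda_{\min}(\GG)$, so that Weyl's inequality transfers the continuous bound up to a constant factor. The main obstacle I foresee is the cluster-decoupling step: the $O(1/s)$ entrywise bound on the off-diagonal blocks of $\GG$ only gives an $O(1)$ operator-norm perturbation, which is enormously larger than the target $(\O\Delta)^{2(\ell-1)}$, so no Gershgorin-type argument is possible. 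Instead one must genuinely exploit that the near-null eigenvector inside each cluster has vanishing low-order moments -- it is highly oscillating across its cluster -- so that the sinc kernel, which varies slowly over the small cluster width $\tau\Delta\ll\rho$, effectively averages out against those oscillations. A secondary difficulty is obtaining a sharp $\sigma_{\min}(V)$ bound independent of $\tau$ in the single-cluster analysis: a crude estimate loses a factor of $\tau^{r-1}$, so a better basis adapted to the cluster geometry (Chebyshev or Legendre polynomials, or a divided-difference representation) is likely required.
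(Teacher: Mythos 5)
Your route is genuinely different from the paper's. The paper proves the \emph{discrete} bound \eqref{eq:main-bound-finite-n-g} first by a ``decimation-and-blowup'' argument: it picks a decimation parameter $\lambda\approx\O$ such that the blown-up nodes $\lambda\xvec$ (read modulo $2\pi$) are \emph{all} pairwise separated --- cluster-mates by $\gtrsim\Delta\O$ and non-cluster-mates by $\gtrsim 1/s^2$ --- and then slices the rows of $\VV_N$ into $\sim N/s$ square Vandermonde blocks with nodes $e^{\imath\lambda t_j}$, to which Gautschi's inverse bound applies directly; the product of pairwise gaps then yields the $(\Delta\O)^{\ell-1}$ factor and the continuous bound \eqref{eq:main-bound-thm} is obtained simply as the limit $N\to\infty$. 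The blowup sidesteps the cluster-decoupling problem entirely: there is no perturbation or block-diagonalization of $\GG$ to control, only an exact orthogonal decomposition of rows and a per-block Vandermonde estimate.

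Your proposal, by contrast, works continuous-first and has two gaps that are not cosmetic. First, the decoupling step you flag yourself is genuinely blocking: the off-diagonal entries of $\GG$ are $O(1/(\O\rho))=O(1/s)$, so the off-diagonal \emph{operator norm} is $O(1)$, which is incomparably larger than the target $(\Delta\O)^{2(\ell-1)}$; any argument that treats the coupling as a norm perturbation (Gershgorin, Weyl, Schur complement with crude bounds) loses everything. The ``cancellation against oscillating near-null vectors'' you gesture at is the right physical intuition but it is precisely the hard quantitative content that a proof must supply, and you do not supply it. Second, and more seriously for the specific statement you are asked to prove, the proposed transfer continuous$\to$discrete via $\|\GG_N-\GG\|$ and Weyl cannot recover \eqref{eq:main-bound-finite-n-g} under the hypothesis $N>2s^3\lceil\O/4s\rceil$. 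The Riemann-sum error is only $O(1/N)$ (with constants depending on $\O$ and the nodes), whereas $\lambda_{\min}(\GG)\sim(\Delta\O)^{2(\ell-1)}$ can be arbitrarily small as $\Delta\to0$ with $\O,s,\ell$ fixed; keeping the perturbation below $\tfrac12\lambda_{\min}(\GG)$ would force $N\gtrsim(\Delta\O)^{-2(\ell-1)}$, which is not implied by the theorem's $N$-condition and in fact grows without bound while the theorem's threshold stays fixed. So even granting the continuous bound, the proposed discretization does not give the finite-$N$ statement with the claimed threshold; the correct logical direction, as the paper shows, is discrete-to-continuous.
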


The proof of \prettyref{thm:main-theorem} is presented in
\prettyref{sub:theproof} below. It is based on the
``decimation-and-blowup'' technique, previously used in the context of
super-resolution in
\cite{akinshin_accuracy_2015,akinshin2017geometry,batenkov_accurate_2017,batenkov_stability_2016,superres_clusters18}
and references therein. In a nutshell, the main idea is to choose an
appropriate ``decimation'' parameter $\lambda \approx \O$ such that
the ``inflated'' nodes in the vector $\lambda\xvec$ (considered in the
wrap-around sense) are separated by $\lambda\Delta \approx \O\Delta$
from its cluster neighbors, and by a constant from the other
nodes. Then we fix sufficiently large $N$ and divide the $2N+1$ rows
of $\VV_N$ into groups of $s$ rows, separated by $\lambda
N\over\O$. Each of the resulting \emph{square} Vandermonde matrices
can be explicitly estimated (the inverses have well-known behaviour),
and has smallest singular value of the order
${1\over\sqrt{N}} \left(\Delta\O\right)^{\ell-1}$. The main technical
part is to show that such $\lambda$ exists, and it is proved in
\prettyref{lem:blowup-lemma} by a union bound argument, showing that
the measure of all ``bad'' values of $\lambda$ (causing a collision of
at least two nodes) is small. The condition on $N$ in
\eqref{eq:main-bound-finite-n-g} is obtained by accurate counting of
how many such ``bad'' intervals exist.

\begin{rem}
  The condition $4\tau\Delta \leq \rho$ ensures that the range of
  admissible $\O$ is non-empty, and it will clearly be satisfied for
  all small enough $\Delta$ with all the rest of the parameters fixed.
\end{rem}

\begin{rem}\label{rem:different-confs}
  The same node vector $\xvec$ can be regarded as a clustered
  configuration with different choices of the parameters
  $\left(\ell,\rho,\tau\right)$. For example, the vector $\xvec$ from
  the beginning of this section (and also
  \prettyref{fig:sigma.min.first.simulation}) is both
  $\left(\Delta,{\pi\over 4}+\Delta,3,2,1\right)$-clustered and
  $\left(\Delta,\rho,3,3,{\pi\over{4\Delta}} +2 \right)$-clustered, with
  any $\rho$. To obtain as tight a bound as possible, one should
  choose the minimal $\ell$ such that the condition
  \eqref{eq:Omega-req-th1} is satisfied for $\Omega$ within the
  range of interest. For instance, $\Omega$ might be too small if
  $\rho$ is small enough, however by choosing $\ell=s$ one is able to
  increase $\rho$ without bound. See \prettyref{fig:breakdown} for a
  numerical example.
\end{rem}

\begin{rem}
  The constant $\Cr{main-c-n}$ is given explicitly in
  \eqref{eq:main-c-n-def}, and it decays in $s$
  like $\sim s^{-2s}$. It is
  plausible that the best possible bound would scale like $c^{-\ell}$ for
  some absolute constant $c > 1$, see also
  \prettyref{rem:compare-with-LL} below.
\end{rem}

Our next result is the analogue of \eqref{eq:main-bound-finite-n-g}
for the Vandermonde matrix $\VV_N$ as in \eqref{eq:vand-def-2}, albeit
under an extra assumption that the nodes are restricted to the
interval $\frac{1}{s^2} \left(-\frac{\pi}{2},\frac{\pi}{2}\right]$.

\begin{cor}\label{cor:finite-n-lower-bound}
  There exists a constant
  $\Cl{main-vand}=\Cr{main-vand}\left(s\right)$ such that for any
  $4\tau\Delta\le\min\left({\rho}, {1\over{s^2}}\right)$, any
  $\xiv=\left(\xi_1,\dots,\xi_s\right) \subset \frac{1}{s^2}
  \left(-\frac{\pi}{2},\frac{\pi}{2}\right]$ forming a
  $\left(\Delta,\rho,s,\ell,\tau\right)$-clustered configuration, and
  any $N$ satisfying
  \begin{equation}
    \label{eq:n-cond-cor1}
    \max\left(\frac{4\pi s}{\rho},4s^3\right) \leq N \leq {\pi
      s\over{\tau\Delta}},
  \end{equation}
  we have
  \begin{equation}\label{eq:sigma-min-vand-finite-n}
    \sigma_{\min}\left(\VV_N\left(\xiv\right)\right)\geq
    \Cr{main-vand}\cdot \left(N\Delta\right)^{\ell-1}.
  \end{equation}
\end{cor}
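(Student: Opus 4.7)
The plan is a direct rescaling that reduces \prettyref{cor:finite-n-lower-bound} to \prettyref{thm:main-theorem}. Set $\Omega := N/s^2$ and $\xvec := s^2 \xiv = (N/\Omega)\xiv$. By \prettyref{def:vn-rescaled-def} this gives $\VV_N(\xvec,\Omega) = \VV_N((\Omega/N)\xvec) = \VV_N(\xiv)$. Since each $\xi_j \in (-\pi/(2s^2), \pi/(2s^2)]$, we have $s^2 \xi_j \in (-\pi/2, \pi/2]$, so $\xvec$ lies in the admissible range of \prettyref{def:partial-cluster}. Moreover, because $\xvec$ is contained in an interval of length at most $\pi$, all wrap-around distances reduce to ordinary absolute differences, and are a factor $s^2$ larger than those of $\xiv$; hence $\xvec$ forms a $(s^2\Delta,\, s^2\rho,\, s,\, \ell,\, \tau)$-clustered configuration. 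In particular $\Delta(\xvec)\cdot\Omega = s^2\Delta \cdot N/s^2 = N\Delta$, which is precisely the quantity appearing in \eqref{eq:sigma-min-vand-finite-n}.

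Next, I would verify the hypotheses of \prettyref{thm:main-theorem} for this $\xvec, \Omega$. The rescaled separation condition $4\tau\Delta(\xvec) \le \rho(\xvec)$ reduces to $4\tau\Delta \le \rho$, which is included in the hypothesis of the corollary. The two-sided bandwidth condition \eqref{eq:Omega-req-th1}, after cancelling a common factor of $s^2$, becomes $\frac{4\pi s}{\rho} \le N \le \frac{\pi s}{\tau\Delta}$, which is exactly the pair of bounds on $N$ in \eqref{eq:n-cond-cor1}.

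The one slightly technical step is verifying the discretization inequality $N > 2s^3\lceil\Omega/(4s)\rceil = 2s^3\lceil N/(4s^3)\rceil$. Writing $N = 4s^3 q + r$ with $q \ge 1$ and $0 \le r < 4s^3$, the ceiling equals $q$ when $r = 0$ (so $2s^3 q = N/2 < N$) and equals $q+1$ when $r > 0$ (so $2s^3(q+1) = N/2 + 2s^3 - r/2$, which is $< N$ iff $N + r > 4s^3$, holding since $N \ge 4s^3$ and $r > 0$). In either case the inequality follows from the hypothesis $N \ge 4s^3$ in \eqref{eq:n-cond-cor1}. Applying \eqref{eq:main-bound-finite-n-g} then yields $\sigma_{\min}(\VV_N(\xiv)) \ge \Cr{main-c-n}\cdot(N\Delta)^{\ell-1}$, so the conclusion follows with $\Cr{main-vand} := \Cr{main-c-n}$. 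No genuine obstacle arises; the whole argument is bookkeeping of scaling factors, the only mild subtlety being the integer discretization check above. The extra assumption $4\tau\Delta \le 1/s^2$ plays the role of ensuring that the range \eqref{eq:n-cond-cor1} of admissible $N$ is non-empty, since it forces $4s^3 \le \pi s/(\tau\Delta)$.
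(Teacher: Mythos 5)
Your proof is correct and is essentially the same argument as the paper's: set $\Omega := N/s^2$ and rescale the nodes by $s^2$ so that $\VV_N(\xiv)$ is identified with $\VV_N(\xvec,\Omega)$, verify that the rescaled configuration is $(s^2\Delta, s^2\rho, s,\ell,\tau)$-clustered, check that the hypotheses of Theorem~\ref{thm:main-theorem} reduce exactly to \eqref{eq:n-cond-cor1} and the discretization bound $N>2s^3\lceil N/(4s^3)\rceil$, and conclude via \eqref{eq:main-bound-finite-n-g} with $\Cr{main-vand}=\Cr{main-c-n}$. Your case analysis of the ceiling term is a slightly more explicit version of the paper's one-line argument $\lceil x\rceil < 2x$ for $x\geq 1$; one small addendum is that the hypothesis $4\tau\Delta\le 1/s^2$ also guarantees $\tau < \pi/(s^2\Delta)$, which is needed for the rescaled vector to satisfy the constraint $\tau<\pi/\widetilde{\Delta}$ in Definition~\ref{def:partial-cluster}, not only the nonemptiness of the admissible range of $N$.
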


\begin{proof}
  \newcommand{\tO}{\widetilde{\O}} \renewcommand{\tt}{\widetilde{t}}
  \newcommand{\tD}{\widetilde{\Delta}}
  \newcommand{\tX}{\widetilde{\xvec}}
  \newcommand{\tE}{\widetilde{\eta}}
  \newcommand{\tR}{\widetilde{\rho}}

  Let us choose $\tO:={N\over s^2}$ so that for all $j=1,\dots,s$ we
  have
  \begin{equation*}
    \tt_j:={N\xi_j\over \tO} \in\left(-{\pi\over 2},{\pi\over 2}\right].
  \end{equation*}

  Further define $\tD:=s^2 \Delta$, and $\tR:=s^2\rho$. We immediately
  obtain that the vector $\tX:=\left(\tt_1,\dots,\tt_s\right)$ forms a
  $\left(\tD,\tilde{\rho},s,\ell,\tau\right)$-clustered configuration
  according to \prettyref{def:partial-cluster}, and the rectangular
  Vandermonde matrix $\VV_N\left(\xiv\right)$ in \eqref{eq:vand-def-2}
  is precisely $\VV_N\left(\tX,\tO\right)$. Clearly,
  $4\tau\tD \le s^2 \rho = \tR$, and also
  \begin{equation}\label{eq:cond-nsatisfied-cor1}
    \tO s^2 = N \geq 4s^3 \Longrightarrow {\tO \over {4s}} \geq 1
    \Longrightarrow {2\tO \over {4s}} > \left\lceil {\tO \over
        {4s}}\right\rceil \Longrightarrow N = \tO s^2 > 2s^3 \left\lceil
      {\tO \over {4s}}\right\rceil.
  \end{equation}
  Using \eqref{eq:n-cond-cor1}, we obtain precisely the conditions
  \eqref{eq:Omega-req-th1} with $\tO,\tR$ in place of $\Omega,\rho$
  respectively. Therefore the conditions of
  \prettyref{thm:main-theorem} are satisfied for
  $\tX,\tO,\tR,\tD,\tau$, and so \eqref{eq:sigma-min-vand-finite-n}
  follows immediately from \eqref{eq:cond-nsatisfied-cor1} and
  \eqref{eq:main-bound-finite-n-g}, with
  $\Cr{main-vand}=\Cr{main-c-n}$.
\end{proof}

\begin{rem}\label{rem:compare-with-LL}
  During the revision of the present paper, the authors of
  \cite{li_stable_2017} (second version) investigated the question of
  bounding $\sigma_{\min}(\VV_N)$ under assumptions on node
  distribution which are similar to our clustering model (they are
  called ``sparse clumps'' in \cite{li_stable_2017}.) They also obtain
  the scaling $\left(N\Delta\right)^{\ell-1}$ for the smallest
  singular value. Comparing their results to
  \prettyref{cor:finite-n-lower-bound} (see also Remark 4 in their
  paper), we note the following.
  \begin{enumerate}
  \item They do not have the requirement that the vector $\xiv$ should
    be restricted to a small interval.
  \item Their bounds hold whenever $N\geq s^2$, while we require
    $N\geq 4s^3$. 
  \item Although their model is more general, their constants are more
    complicated. Nevertheless, the corresponding constant
    $\Cr{main-vand}$ scales as $\ell^{-\ell}$ which is better than our
    $s^{-2s}$.
  \item Their equation (2.5) in Theorem 2 requires the
    product $\rho N$ to be at least $\ell^{5/2}{20s \over{\sqrt{N\Delta}}}$,
    which essentially forces a single cluster if $\Delta$ is very
    small (or, alternatively, prevents $\Delta$ to be too small for
    certain $s,\ell$) \footnote{The particular
      equation and theorem number might change as
      \cite{li_stable_2017} is currently a preprint.}. In contrast, our equation
    \eqref{eq:n-cond-cor1} only requires $\rho N \geq 4\pi s$, and
    therefore doesn't have these restrictions (although both
    conditions require $\rho$ to grow with $s$.)
  \end{enumerate}
\end{rem}

\begin{rem}
  Continuing the above discussion, we would like to emphasize that
  \prettyref{cor:finite-n-lower-bound} is derived by discretization of
  the continuous setting of \prettyref{thm:main-theorem}, and
  therefore it is perhaps not surprising that the conditions for which
  the scaling holds are not optimal.
\end{rem}

Returning back to \prettyref{thm:main-theorem}, it turns out that the
bound \eqref{eq:main-bound-thm} is asymptotically optimal.

\begin{thm}\label{thm:optimality}
  There exists an absolute constant $\eta\ll 1$ and a constant
  $\Cl{upper}=\Cr{upper}\left(\ell\right)$ such that for any
  $2\leq \ell\leq s$ and any $\Delta$ satisfying
  $\Delta<{\pi\over{2(\ell-1)}}$, there exists a
  $\left(\Delta,\rho',s,\ell,\tau'\right)$-clustered configuration
  $\xvec_{\min}$ with $s$ nodes and certain $\rho',\tau'$ depending
  only on $s,\ell$, for which
  $$
  \lambda_{\min}\left(\GG\left(\xvec_{\min},\O\right)\right) \leq
  \Cr{upper}\cdot \left(\Delta\O\right)^{2\left(\ell-1\right)},\qquad  \Delta\O<\eta.
  $$
\end{thm}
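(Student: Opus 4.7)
My plan is to construct $\xvec_{\min}$ as a single tight cluster of $\ell$ equispaced nodes supplemented by $s-\ell$ widely-separated ``outliers'', and then upper-bound $\lambda_{\min}(\GG)$ by testing the Rayleigh quotient on a vector supported only on the cluster. The resulting quadratic form reduces to the Rayleigh quotient of a prolate submatrix, whose smallest eigenvalue is already controlled by Slepian's sharp asymptotic \eqref{eq:slepian-explicit-bound}.

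\textbf{Construction.} Take the cluster nodes to be $t_j^{(c)} := j\Delta$ for $j=0,1,\dots,\ell-1$. The hypothesis $\Delta < \pi/(2(\ell-1))$ guarantees that all cluster nodes lie in $[0,\pi/2) \subset (-\pi/2,\pi/2]$. Place the remaining $s-\ell$ nodes $u_1,\dots,u_{s-\ell}$ as an equispaced set in $(-\pi/2,-\pi/4]$ (if $s=\ell$, no outliers are needed). Set $\tau' := \ell-1$ so that any two cluster nodes satisfy $\Delta \leq \|t_i^{(c)} - t_j^{(c)}\|_\mathbb{T} \leq \tau'\Delta$, and choose $\rho'$ as an explicit constant depending only on $s,\ell$ (governed by the spacing of the $u_k$ and their distance to $[0,\pi/2)$). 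A direct check, using $(\ell-1)\Delta < \pi/2$, shows that $\xvec_{\min}$ is a $(\Delta,\rho',s,\ell,\tau')$-clustered configuration in the sense of \prettyref{def:partial-cluster}, with each $\xvec^{(j)}$ equal to the full cluster for cluster nodes and a singleton for each outlier.

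\textbf{Reduction to the prolate matrix.} Write $\GG = \GG(\xvec_{\min},\O)$ as a block matrix
\[
\GG = \begin{pmatrix} \GG_{11} & \GG_{12} \\ \GG_{12}^H & \GG_{22}\end{pmatrix},
\]
where $\GG_{11}\in\CC^{\ell\times\ell}$ is the Gramian of the cluster nodes, i.e.\ the prolate matrix $\GG((0,\Delta,\dots,(\ell-1)\Delta),\O)$. Let $\vv_1\in\CC^\ell$ be a unit eigenvector of $\GG_{11}$ corresponding to $\lambda_{\min}(\GG_{11})$, and let $\cvec := (\vv_1,\boldsymbol{0})\in\CC^s$. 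Then $\|\cvec\|_2 = 1$ and
\[
\lambda_{\min}(\GG) \;\leq\; \cvec^H \GG \cvec \;=\; \vv_1^H \GG_{11} \vv_1 \;=\; \lambda_{\min}(\GG_{11}).
\]
This is just the Cauchy interlacing property for principal submatrices of a Hermitian positive-definite matrix; no information about $\GG_{12}$ or $\GG_{22}$ is needed, so the outlier placement is irrelevant for the bound (it only serves to make $\xvec_{\min}$ a valid clustered configuration).

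\textbf{Applying Slepian's asymptotic.} The matrix $\GG_{11}$ is (up to the scalar factor $\pi/(\O\Delta)$ described in \prettyref{sec:known-bounds}) exactly Slepian's $\boldsymbol{Q}(\ell,W)$ with $W=\O\Delta/(2\pi)$. The explicit asymptotic \eqref{eq:slepian-explicit-bound}, applied with $s\leadsto\ell$, yields
\[
\lambda_{\min}(\GG_{11}) \;=\; \Cr{slepian}(\ell)\,(\O\Delta)^{2(\ell-1)}\bigl(1 + O(\O\Delta)\bigr).
\]
Choosing the absolute constant $\eta\ll 1$ small enough that the $O(\O\Delta)$ term is at most $1$ whenever $\O\Delta < \eta$, and setting $\Cr{upper}(\ell) := 2\Cr{slepian}(\ell)$, we obtain $\lambda_{\min}(\GG(\xvec_{\min},\O)) \leq \Cr{upper}(\ell)(\O\Delta)^{2(\ell-1)}$ as claimed.

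\textbf{Main obstacle.} No single step is genuinely hard: the heart of the argument is the variational bound combined with an invocation of Slepian. The minor bookkeeping is in verifying the clustered-configuration axioms for the chosen $\xvec_{\min}$ (in particular ensuring cluster and outliers sit on the circle at well-controlled wrap-around distance) and in choosing $\eta$ to absorb the multiplicative $(1+O(\O\Delta))$ error from \eqref{eq:slepian-explicit-bound}.
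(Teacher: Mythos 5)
Your proposal is correct and matches the paper's argument essentially step for step: the paper likewise takes $\xvec_{\min}$ to be $\ell$ equispaced nodes at pitch $\Delta$ together with $s-\ell$ well-separated nodes in $(-\pi/2,0)$, observes that the $\ell\times\ell$ prolate block $\GG^{(\ell,\ell)}$ is a principal submatrix, invokes eigenvalue interlacing (your explicit Rayleigh-quotient test vector is the same bound), and applies Slepian's asymptotic \eqref{eq:slepian-explicit-bound} with $s\leadsto\ell$, absorbing the $(1+O(\O\Delta))$ factor by choosing $\eta$ small. The only cosmetic differences are the placement interval for the outliers and whether the cluster is indexed from $0$ or $1$.
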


The proof of \prettyref{thm:optimality} is presented in
\prettyref{sub:optimality}.

Finally we conclude with the optimal scaling for the
\emph{condition number} of $\VV_N=\VV_N\left(\xvec,\O\right)$, which
is of interest to some applications.
\begin{cor}\label{cor:cond-number}
  Fix $s,\ell,\rho,\tau$ and $\O$. As $\Delta\to 0$ and $N\to\infty$,
  for any $\left(\Delta,\rho,s,\ell,\tau\right)$-clustered
  configuration $\xvec$, we have
$$
\kappa\left(\VV_N\left(\xvec,\O\right)\right):=\frac{\sigma_{\max}(\VV_N)}{\sigma_{\min}(\VV_N)}
\asymp \srf^{\ell-1}.
$$
\end{cor}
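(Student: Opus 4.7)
The corollary follows by combining matching bounds on $\sigma_{\max}(\VV_N)$ and $\sigma_{\min}(\VV_N)$ as $\Delta\to 0$ and $N\to\infty$ with $s,\ell,\rho,\tau,\O$ fixed. The largest singular value is easy: every diagonal entry of $\GG_N=\VV_N^H\VV_N$ equals $(2N+1)/(2N)$, so the trace bound $\sigma_{\max}^2(\VV_N)\le\mathrm{tr}(\GG_N)\le 2s$ together with the lower bound $\sigma_{\max}(\VV_N)\ge 1$ obtained by testing on any standard basis vector give $\sigma_{\max}(\VV_N)\asymp 1$, uniformly in $N$. The $\lesssim$ half of $\kappa\asymp\srf^{\,\ell-1}$ is then immediate from \prettyref{thm:main-theorem}: for $\Delta$ small enough and $N$ large enough the hypotheses \eqref{eq:Omega-req-th1} and the condition on $N$ in \eqref{eq:main-bound-finite-n-g} are satisfied, giving $\sigma_{\min}(\VV_N)\ge \Cr{main-c-n}(\Delta\O)^{\ell-1}$ and hence $\kappa(\VV_N)\le\sqrt{2s}/[\Cr{main-c-n}(\Delta\O)^{\ell-1}]\lesssim\srf^{\,\ell-1}$.

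The matching lower bound $\kappa\gtrsim\srf^{\,\ell-1}$ is the interesting direction. Fix an index set $S\subset\{1,\dots,s\}$ with $|S|=\ell$ selecting an actual $\ell$-element subcluster of $\xvec$, and let $\VV_N^{(S)}$ denote the corresponding $(2N+1)\times\ell$ column submatrix and $\GG^{(S)}$ the $\ell\times\ell$ sinc Gramian built on these cluster nodes. Zero-padding vectors from $\CC^\ell$ to $\CC^s$ gives $\sigma_{\min}(\VV_N)\le\sigma_{\min}(\VV_N^{(S)})$, and by \eqref{eq:sing-eigen-lim} the right-hand side converges to $\sqrt{\lambda_{\min}(\GG^{(S)})}$ as $N\to\infty$. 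To control $\lambda_{\min}(\GG^{(S)})$ I use a test vector: take $c_j:=1/\prod_{k\in S\setminus\{j\}}(t_j-t_k)$, namely the coefficients of the $(\ell-1)$-st divided difference at the cluster nodes. The Hermite-Genocchi representation of the divided difference of $e^{\imath t\w}$ gives $\bigl|\sum_{j\in S}c_j e^{\imath t_j\w}\bigr|\le|\w|^{\ell-1}/(\ell-1)!$, so $\|f_c\|_\O^2\le\O^{2(\ell-1)}/[(2\ell-1)((\ell-1)!)^2]$, while $\|c\|^2\ge\max_j|c_j|^2\ge(\tau\Delta)^{-2(\ell-1)}$ since each $|t_j-t_k|\le\tau\Delta$. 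The Rayleigh quotient identity $c^H\GG^{(S)}c=\|f_c\|_\O^2$ (the inner-product computation from the proof of positivity of $\GG$) therefore yields $\lambda_{\min}(\GG^{(S)})\le C(\ell,\tau)(\Delta\O)^{2(\ell-1)}$, whence $\sigma_{\min}(\VV_N)\lesssim(\Delta\O)^{\ell-1}$ and $\kappa(\VV_N)\gtrsim\srf^{\,\ell-1}$.

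The main conceptual point, and mildly the main obstacle, is that the $\asymp$ in the statement is tight precisely when $\xvec$ contains a cluster of size exactly $\ell$: the upper estimate above is uniform over all $(\Delta,\rho,s,\ell,\tau)$-clustered configurations, whereas the matching lower estimate on $\kappa$ requires the presence of a genuine $\ell$-cluster (otherwise the condition number scales like $\srf^{\ell'-1}$ where $\ell'<\ell$ is the true maximum cluster size). Everything beyond this is routine bookkeeping of the constants.
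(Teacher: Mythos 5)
Your argument is correct, and the route is genuinely different from the paper's in the interesting half. The easy parts agree: you bound $\sigma_{\max}(\VV_N)\asymp 1$ (the paper does this via $\|\VV_N\|_2\le\sqrt{s(2N+1)/(2N)}$ and a Dirichlet-kernel lower bound, you via the trace and a basis vector --- both equally fine), and you both invoke \prettyref{thm:main-theorem} for $\sigma_{\min}(\VV_N)\gtrsim(\Delta\O)^{\ell-1}$. The difference is in the matching upper bound $\sigma_{\min}(\VV_N)\lesssim(\Delta\O)^{\ell-1}$ (equivalently the lower bound on $\kappa$): the paper simply cites \prettyref{thm:optimality}, whose proof in turn rests on Slepian's exact asymptotic \eqref{eq:slepian-explicit-bound} for the equispaced prolate matrix plus eigenvalue interlacing for a principal submatrix. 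You instead produce an explicit divided-difference test vector $c_j=\prod_{k\in S\setminus\{j\}}(t_j-t_k)^{-1}$ on a chosen $\ell$-subcluster $S$, bound $|\sum_j c_j e^{\imath t_j\w}|\le|\w|^{\ell-1}/(\ell-1)!$ via Hermite--Genocchi, and combine with $\cvec^*\GG\cvec=\|\widehat\mu\|_{2,\O}^2$ (which is \prettyref{prop:ft-norm-via-prolate}). This is more elementary --- no appeal to the prolate matrix literature --- and also more general, since it works for any configuration containing a genuine $\ell$-cluster rather than only the particular equispaced $\xvec_{\min}$ exhibited in \prettyref{thm:optimality}; it also gives the explicit constant dependence on $\tau$. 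Your closing caveat is apt and worth keeping: the corollary as written says ``for any $(\Delta,\rho,s,\ell,\tau)$-clustered $\xvec$'' but the $\gtrsim$ half of $\asymp$ genuinely requires a cluster of size exactly $\ell$ to be present; the paper's proof (via the existence statement in \prettyref{thm:optimality}) has the same implicit restriction but does not flag it. One microscopic point: since \prettyref{def:partial-cluster} only bounds distances to the anchor node $t_j$, other pairs within $S$ satisfy $|t_i-t_k|\le 2\tau\Delta$ rather than $\tau\Delta$; either take $j$ to be the anchor so $|c_j|\ge(\tau\Delta)^{-(\ell-1)}$ as you wrote, or absorb the extra factor $2^{\ell-1}$ into $C(\ell,\tau)$ --- the scaling is unaffected.
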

\begin{proof}
  It is immediate that as $N\to\infty$, the largest singular value
  (the spectral norm) of $\VV_N$ is bounded from  above by a
  constant:
  $$
  \sigma_{\max}(\VV_N) =  \|\VV_N\|_2 \leq \sqrt{s{{2N+1}\over{2N}}} \leq \sqrt{2s},
  $$
  while the lower bound can be obtained by
  $$
  \sigma_{max}(\VV_N) = \sqrt{\lambda_{\max}(\GG_N)} \geq \sqrt{{1\over{2N}}\max_{t\in\RR} \DD_N\left(t\right)} > 1.
  $$
  Combining this with \prettyref{thm:main-theorem} and
  \prettyref{thm:optimality} finishes the proof.
\end{proof}

\subsection{Stable super-resolution of point
  sources}\label{sub:discussion}

The problem of (sparse) super-resolution is to recover discrete,
point-like objects from their noisy and bandlimited spectral
measurements. It arises in many fields such as frequency estimation,
sampling theory, array processing, astronomical imaging, seismic
imaging, nonuniform FFT, statistics, radar signal detection, error
correction codes, and others
\cite{auton_investigation_1981,candes_towards_2014,cuyt_sparse_2016,donoho_superresolution_1992,blu2008,fomel_seismic_2013,li_phase_2015,li_parametric_2000,pan_towards_2016}. Our
main results have direct implications for the problem of
super-resolution under sparsity constraints, in the so-called
``on-grid'' model\footnote{Note that the results in the previous
  section are valid for ``off-grid'' setting, as the nodes $\{t_j\}$
  can have arbitrary real values in
  $\left(-{\pi\over 2},{\pi\over 2}\right]$.}.

\begin{defn}\label{def:grid}
For $\Delta>0$, denote by $\grd$ the discrete grid
$$
\grd:=\left\{k\Delta,
  \;k=-\left\lfloor{\pi\over{2\Delta}}\right\rfloor,\dots,\left\lfloor{\pi\over{2\Delta}}\right\rfloor\right\} \subset \left[-{\pi\over 2},{\pi\over 2}\right].
$$  
\end{defn}

\begin{defn}\label{def:R-class}
  For $\Delta,\rho,s,\ell,\tau$ as in \prettyref{def:partial-cluster},
  let $\grdM:=\grdM\left(\Delta,\rho,s,\ell,\tau\right)$ be the set of
  point measures of the form $\mu=\sum_{j=1}^s a_j \delta_{t_j}$,
  where $t_j\in\grd$ for all $j=1,\dots,s$, $\delta_t$ is the Dirac
  measure supported on $t\in\RR$, $a_j\in\CC$, and the node vector
  $\left(t_1,\dots,t_s\right)$ forms a
  $\left(\Delta,\rho,s,\ell,\tau\right)$-clustered configuration
  according to \prettyref{def:partial-cluster}.
\end{defn}

Consider the problem of reconstructing $\mu\in\grdM$ from approximate
spectral data $\widehat{\mu}\left(\w\right)$ restricted to some
interval $\w\in\left[-\O,\O\right]$. Here the Fourier transform
$\widehat{\mu}$ is defined as
$$
\mu = \sum_{j=1}^s a_j \delta_{t_j} \Longrightarrow \widehat{\mu}\left(\w\right) = \sum_{j=1}^s a_j \exp\left(\imath \w t_j\right).
$$

The measurement space $L_2\left(\left[-\O,\O\right]\right)$ contains
complex-valued square-integrable functions supported on
$\left[-\O,\O\right]$, with the norm
\begin{equation}\label{eq:omega-l2-norm}
\|f\|^2_{2,\O} := {1\over{2\O}}\int_{-\O}^{\O} |f\left(\w\right)|^2 d\w.
\end{equation}

Proceeding as in
\cite{donoho_superresolution_1992,demanet_recoverability_2014}, we
define the minimax error for this problem as follows.

\begin{defn}\label{def:minimax-error}
  For $\grdM$ as above, $\varepsilon>0$ and $\Omega > 0$, the minimax error
  $\minmax=\minmax(\grdM,\Omega,\varepsilon)$ is the quantity
  \begin{equation}
    \label{eq:minmax-def}
    \minmax := \inf_{\widetilde{\mu}\left(\Phi_{\mu,e}\right)\in\grdM}\sup_{\mu\in\grdM}\sup_{e \in
      L_2\left(\left[-\Omega,\Omega\right]\right),\;\|e\|_{2,\O}\leq\varepsilon} \|\widetilde{\mu}-\mu\|_2,
  \end{equation}
  where
  \begin{itemize}
  \item
    $\Phi_{\mu,e} \in L_2\left(\left[-\Omega,\Omega\right]\right)$
    is the measurement function given by
    \begin{equation}\label{eq:measurement-fun}
    \Phi_{\mu,e}(\omega) = \widehat{\mu}(\w) + e(\w);
  \end{equation}
  
  \item $\widetilde{\mu}$ is any
    deterministic mapping from
    $L_2\left(\left[-\Omega,\Omega\right]\right)$ to $\grdM$;
  \item for $\mu=\sum_{j=1}^s a_j \delta_{t_j}$, the norm $\|\mu\|_2$
    is the discrete $\ell_2$ norm of the coefficient vector:
    $$
    \|\mu\|_2:=\left(\sum_{j=1}^s |a_j|^2\right)^{1\over 2}.
    $$
  \end{itemize}
\end{defn}

Using arguments very similar to
\cite{morgenshtern2016,donoho_superresolution_1992,demanet_recoverability_2014,li_stable_2017}
and the novel bounds of \prettyref{thm:main-theorem} and
\prettyref{thm:optimality}, we obtain the optimal rate for the minimax
error for clustered on-grid super-resolution.

\begin{thm}\label{thm:minimax}
  Fix $s \geq 1,\;2\leq \ell \leq s,\;\varepsilon>0$. Put
  $\srf:={\pi\over{\Delta\Omega}}$.  Then the following hold.
  \begin{enumerate}
  \item For any $\rho\geq 0,\;\ell-1\leq \tau$ and $M\geq \pi$, there
    exists $\alpha \geq M$ such that for all
    sufficiently small $\Delta$ it holds that
  \begin{equation}
  \label{eq:new-minimax-upper}
    \minmax\left(\grdM\left(\Delta,\rho,s,\ell,\tau\right),\O,\varepsilon\right) \leq C_{s,\ell} \srf^{2\ell-1} \varepsilon,\quad\srf=\alpha,
  \end{equation}
  for some absolute constant $C_{s,\ell}$ depending only on $s$ and $\ell$.
\item There exists an absolute constant $\beta \gg 1$ and
  $\rho',\tau'$, depending only on $s,\ell$, such that for any
  $\Delta<{\pi\over{2(2\ell-1)}}$ it holds that
  \begin{equation}
    \label{eq:new-minimax-lower}
    \minmax\left(\grdM\left(\Delta,\rho',s,\ell,\tau'\right),\O,\varepsilon\right) \geq C_{\ell} \srf^{2\ell-1}\varepsilon,\quad \srf > \beta,
  \end{equation}
  for some absolute constant $C_{\ell}$ depending only on $\ell$.
  \end{enumerate}
\end{thm}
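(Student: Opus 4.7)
The plan is to reduce both bounds to the spectral estimates of \prettyref{thm:main-theorem} and \prettyref{thm:optimality}, applied to the \emph{combined} configuration arising from two competing measures, in the spirit of the classical minimax arguments of \cite{donoho_superresolution_1992,demanet_recoverability_2014}. For the upper bound I would take $\widetilde\mu$ to be any element of $\grdM$ minimizing $\|\Phi_{\mu,e}-\widehat\nu\|_{2,\O}$ over $\nu\in\grdM$; feasibility of $\mu$ then gives $\|\widehat{\mu-\widetilde\mu}\|_{2,\O}\le 2\varepsilon$. The combined support of $\mu-\widetilde\mu$ is a subset of $\grd$ of cardinality at most $2s$, still pairwise separated by at least $\Delta$. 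Provided $\O$ (equivalently $\srf=\alpha$) is chosen so that the combined node vector $\xvec^\circ$ forms a $(\Delta,\rho^\circ,2s,2\ell,\tau^\circ)$-clustered configuration compatible with \eqref{eq:Omega-req-th1} for the upgraded parameters, \prettyref{thm:main-theorem} with $\ell\mapsto 2\ell$ yields $\lambda_{\min}(\GG(\xvec^\circ,\O))\ge c\,(\Delta\O)^{2(2\ell-1)}$, whence
\begin{equation*}
  \|\mu-\widetilde\mu\|_2^2 \;\le\; \frac{\|\widehat{\mu-\widetilde\mu}\|_{2,\O}^2}{\lambda_{\min}(\GG(\xvec^\circ,\O))} \;\le\; C_{s,\ell}^2\,\srf^{2(2\ell-1)}\varepsilon^2,
\end{equation*}
matching \eqref{eq:new-minimax-upper} after taking square roots.

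For the lower bound I would invoke \prettyref{thm:optimality} with $(s,\ell)\mapsto(2\ell,2\ell)$ to obtain a $2\ell$-node configuration $\xvec^{*}\subset\grd$ (placed on the grid by a rescaling, or by a small perturbation argument exploiting continuity of eigenvalues in the nodes) together with a unit eigenvector $\vv\in\CC^{2\ell}$ of $\GG(\xvec^{*},\O)$ witnessing $\lambda_{\min}\le\Cr{upper}(2\ell)\,(\Delta\O)^{2(2\ell-1)}$. Splitting $\vv$ along an arbitrary partition $\{1,\dots,2\ell\}=J_1\sqcup J_2$ of equal size $\ell$ and scaling by $\beta>0$, set
\begin{equation*}
  \mu_1 := \sum_{j\in J_1}\beta v_j\,\delta_{t_j^{*}}, \qquad
  \mu_2 := -\sum_{j\in J_2}\beta v_j\,\delta_{t_j^{*}},
\end{equation*}
padding each with zero-coefficient far-away grid points so that both lie in $\grdM(\Delta,\rho',s,\ell,\tau')$. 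Then $\|\widehat{\mu_1-\mu_2}\|_{2,\O}=\beta\sqrt{\vv^H\GG\vv}=\beta\sqrt{\lambda_{\min}}$; fixing $\beta$ so that this equals $2\varepsilon$ forces $\beta\gtrsim \varepsilon\,\srf^{2\ell-1}$. Setting $e:=\tfrac12(\widehat{\mu_2}-\widehat{\mu_1})$ gives $\|e\|_{2,\O}=\varepsilon$ and $\Phi_{\mu_1,e}=\Phi_{\mu_2,-e}$, so no deterministic estimator can separate the two measures, and the standard two-point minimax argument concludes $\minmax\ge \tfrac12\|\mu_1-\mu_2\|_2 = \tfrac\beta2 \gtrsim \varepsilon\,\srf^{2\ell-1}$.

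The main technical point is in the upper bound: showing that the combined vector $\xvec^\circ$ genuinely satisfies the clustering hypotheses of \prettyref{thm:main-theorem} with cluster size upgraded to $2\ell$, and in particular that each combined cluster contains at most one $\ell$-cluster from each of $\mu,\widetilde\mu$ so that its total size is bounded by $2\ell$ (rather than by some larger multiple). The governing mechanism is to choose $\tau^\circ$ so that $\tau^\circ\Delta<\rho$, which forces every combined cluster to have diameter less than $\rho$ and hence to contain at most one original cluster from each signal; this choice is exactly what determines the threshold $\alpha\ge M$ on $\srf$ and the resulting constant $C_{s,\ell}$. The lower bound is comparatively routine given \prettyref{thm:optimality}, the only subtlety being the placement of the extremal configuration $\xvec^{*}$ on the grid $\grd$, which is handled by rescaling $\Delta$ or by perturbing the nodes within their eigenvalue-continuity tolerance.
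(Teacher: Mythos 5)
Your overall architecture matches the paper's: a feasibility estimator for the upper bound, and a two-point (packing) argument via \prettyref{thm:optimality} for the lower bound. The lower-bound half is essentially sound modulo minor bookkeeping (the paper invokes \prettyref{thm:optimality} with $(2s,2\ell)$ rather than $(2\ell,2\ell)$ and then splits the $2s$ nodes in half, avoiding your zero-coefficient padding, but both routes are workable). The real problem is in the upper bound, and you have correctly spotted \emph{where} the difficulty lies but not \emph{what} the difficulty actually is.

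Your proposed mechanism --- ``choose $\tau^\circ$ so that $\tau^\circ\Delta<\rho$'' --- only controls the \emph{cluster size} of the combined configuration $\xvec^\circ=\supp(\mu-\widetilde\mu)$. But \prettyref{def:partial-cluster} is a \emph{gap} condition: for every node $t_j$ there must be \emph{no} other node at wrap-around distance strictly between $\tau^\circ\Delta$ and $\rho^\circ$, and \prettyref{thm:main-theorem} furthermore requires $\rho^\circ\ge 4\tau^\circ\Delta$. A node of $\widetilde\mu$ can sit at \emph{any} multiple of $\Delta$ within $(\tau\Delta,\rho/2)$ of a node of $\mu$, so for any fixed pair $(\tau^\circ,\rho^\circ)$ with $\rho^\circ\ge 4\tau^\circ\Delta$ one can place $\widetilde\mu$ so that some cross-distance lands in the forbidden annulus and the combined configuration fails to be clustered. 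This is exactly the step your proposal skips. The paper resolves it with a dedicated combinatorial lemma (\prettyref{lem:merging.two}): around each node one considers a chain of $O(s)$ geometrically growing annuli of ratio $K=8sM$, observes that the at-most-$s$ ``intermediate'' cross-distances can each block at most two annuli, and by pigeonhole finds a free annulus $[a,Ka]$; then $\tau^\circ:=a/\Delta$, $\rho^\circ:=Ka$ are taken \emph{adaptively}, depending on the pair $(\mu,\widetilde\mu)$. It is precisely this adaptivity of $\tau^\circ$ that forces the statement to read ``there exists $\alpha\ge M$'' rather than fixing a universal threshold on $\srf$, and that makes $\Delta_0$ depend on $K$ (hence on $s$ and $M$). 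Without an argument of this kind, the reduction of the upper bound to \prettyref{thm:main-theorem} does not go through.
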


For the proof, see \prettyref{sub:proof-minimax} below. This result
generalizes \cite{demanet_recoverability_2014,li_stable_2017} (where
the scaling $\minmax \asymp \srf^{2\ell-1}\varepsilon$ was derived for
$\ell=s$), as well as \cite{morgenshtern2016} (where it was shown that
for positive $a_j$ it holds that
$\minmax \lessapprox \srf^{2\ell}\varepsilon$, with a comparable
definition of the Rayleigh regularity $\ell$).

A different but closely related setting was considered in the seminal
paper \cite{donoho_superresolution_1992}, where the measure $\mu$ was
assumed to have infinite number of spikes on a grid of size $\Delta$,
with one spike per unit of time on average, but whose local complexity
was constrained to have not more than $R$ spikes per any interval of
length $R$ (such $R$ is called the ``Rayleigh index''). It was shown
in \cite{donoho_superresolution_1992} that the minimax recovery rate
for such measures scales like $\srf^{\alpha}$ where
$2R-1 \leq \alpha \leq 2R+1$. Our partial cluster model can therefore
be regarded as the finite-dimensional version of these ``sparsely
clumped'' measures with finite Rayleigh index, showing the same
scaling of the error -- polynomial in $\srf$ and exponential in the
``local complexity'' of the signal.

If the grid assumption is relaxed, then one might wish to measure the
accuracy of recovery $\|\widetilde{\mu}-\mu\|$ by comparing the
locations of the recovered signal $\widetilde{\mu}$ with the true ones
$\{t_j\}$. In this case, there are additional considerations which are
required to derive the minimax rate, and it is possible to do so under
the partial clustering assumptions. See
\cite{akinshin_accuracy_2015,superres_clusters18} for details, where
we prove that $\minmax \asymp \srf^{2\ell-1}\Delta \varepsilon$ in this
scenario, for uniform bound on the noise
$\|e\|_{\infty}:=\sup_{|\omega|\leq\Omega}\left|e\left(\omega\right)\right|\leq
\varepsilon \lessapprox \srf^{1-2\ell}$. The extreme case $\ell=s$ has
been treated recently in
\cite{batenkov_accurate_2017,batenkov_stability_2016}.

In the case of well-separated spikes (i.e. clusters of size $\ell=1$),
a recent line of work using $\ell_1$ minimization
(\cite{candes_towards_2014,candes_super-resolution_2013,duval_exact_2014,de_castro_exact_2012}
and the great number of follow-up papers) has shown that the problem is
stable and tractable.

Therefore, the partial clustering case is somewhat mid-way between the
extremes $\ell=1$ and $\ell=s$, and while our results in this paper
(and also in \cite{superres_clusters18}) show that it
is much more stable than in the unconstrained sparse case, it is an
intriguing open question whether provably tractable solution
algorithms exist.

Several candidate algorithms for sparse super-resolution are
well-known -- MUSIC, ESPRIT/matrix pencil, and variants; these have
roots in parametric spectral estimation
\cite{stoica_spectral_2005}. In recent years, the super-resolution
properties of these algorithms are a subject of ongoing interest, see
e.g. \cite{fannjiang_compressive_2016,liao_music_2016,bini_error_2017,li_stable_2017,li2019}
and references therein. Smallest singular values of the partial
Fourier matrices $\VV_N$, for finite $N$, play a major role in these
works, and therefore we hope that our results and techniques may be
extended to analyze these algorithms as well.

\section{Proofs}
\label{sec:mainsteps}

\subsection{Blowup}

Here we introduce the uniform blowup of a node vector
$\xvec=\left(t_1,\dots,t_s\right)$ by a positive parameter $\lambda$,
and study the effect of such a blowup mapping on the minimal
wrap-around distance between the mapped nodes.

\begin{lem}\label{lem:blowup-lemma}
  Let $\xvec$ form a $\left(\Delta,\rho,s,\ell,\tau\right)$ cluster, and
  suppose that $\frac{4\pi s}{\rho} \le \O \le{\pi s\over{\tau\Delta}}$. Then, for any
  $0\leq\xi\leq 1$ there exists a set
  $I\subset\left[{\O \over 2s}, {\O\over s} \right]$ of
  total measure ${\O \over 2s}\xi$ such that for every
  $\lambda\in I$ the following holds for every $t_j\in\xvec$:
  \begin{align}
    \label{eq:blown-up1}
    \|\lambda y -\lambda t_j\|_{\mathbb{T}} & \geq \lambda\Delta\geq{\Delta \O \over {2s}}, &&\forall y\in\xvec^{(j)}\setminus\{t_j\}; \\
    \label{eq:blown-up2}
    \|\lambda y -\lambda t_j\|_{\mathbb{T}} & \geq \frac{1-\xi}{s^2}\pi, &&\forall y\in\xvec\setminus\xvec^{(j)}.
  \end{align}

  Furthermore, the set   $I^c:=\left[{\O\over {2s}},{\O\over
      s}\right]\setminus I$ is a union of at most ${s^2\over 2} \left\lceil{\O\over {4 s}}\right\rceil$ intervals.
\end{lem}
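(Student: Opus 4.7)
The plan is to verify \eqref{eq:blown-up1} unconditionally, and then build $I$ by excising from $[\O/(2s),\O/s]$ the set of $\lambda$ violating \eqref{eq:blown-up2}, estimating the excised set by a standard union bound over pairs of nodes.

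For \eqref{eq:blown-up1}, I would exploit the upper hypothesis on $\O$: any $\lambda \in [\O/(2s),\O/s]$ satisfies $\lambda \leq \pi/(\tau\Delta)$, and for $y \in \xvec^{(j)}\setminus\{t_j\}$ the cluster definition gives $\|y-t_j\|_{\mathbb{T}} \leq \tau\Delta$, so $\lambda\|y-t_j\|_{\mathbb{T}} \leq \pi$. This prevents any wrap-around, giving $\|\lambda y - \lambda t_j\|_{\mathbb{T}} = \lambda\|y-t_j\|_{\mathbb{T}} \geq \lambda\Delta \geq \O\Delta/(2s)$, so \eqref{eq:blown-up1} holds for every $\lambda$ in the window and no exclusion is needed on this account.

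For \eqref{eq:blown-up2}, fix an unordered pair $(t_j, y)$ with $y \in \xvec\setminus\xvec^{(j)}$, set $d := \|y-t_j\|_{\mathbb{T}} \in [\rho, \pi]$, and let $\epsilon := (1-\xi)\pi/s^2$. The bad set $\{\lambda : \|\lambda(y-t_j)\|_{\mathbb{T}} < \epsilon\}$ is a union of open intervals of length $2\epsilon/d$ centered at $2\pi k/d$, $k\in\mathbb{Z}$. Inside the window $[\O/(2s),\O/s]$ of length $\O/(2s)$, the number of such centers is at most $\lfloor d\O/(4\pi s)\rfloor + 1 \leq \lceil \O/(4s)\rceil$ (using $d \leq \pi$), and the total bad measure contributed by this pair is at most $(2\epsilon/d)(d\O/(4\pi s)+1) = \epsilon\O/(2\pi s) + 2\epsilon/d \leq \epsilon\O/(2\pi s) + 2\epsilon/\rho$. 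Summing over the at most $s^2/2$ non-cluster pairs and substituting $\epsilon$ yields total bad measure $\leq (1-\xi)(\O/(4s) + \pi/\rho)$; the lower hypothesis $\O \geq 4\pi s/\rho$ is precisely what forces $\pi/\rho \leq \O/(4s)$, so the total is at most $(1-\xi)\O/(2s)$.

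Defining $I$ as the complement of the bad set inside $[\O/(2s),\O/s]$ then yields a union of intervals of measure at least $\O/(2s) - (1-\xi)\O/(2s) = \xi\O/(2s)$, as required, and the $(s^2/2)\lceil \O/(4s)\rceil$ bound on the number of intervals in $I^c$ follows from the same per-pair interval count. The two delicate points I expect to track carefully are (i) the identification $\|\cdot\|_{\mathbb{T}} = |\cdot|$ for within-cluster differences, for which the upper hypothesis on $\O$ is exactly tailored, and (ii) matching the constant $(s^2/2)\lceil \O/(4s)\rceil$ in the interval count without a spurious $+1$, which requires observing that bad intervals straddling the endpoints of $[\O/(2s),\O/s]$ are truncated and can be absorbed into the ceiling rather than each contributing a fresh interval.
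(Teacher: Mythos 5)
Your proof is correct and follows essentially the same route as the paper: verify \eqref{eq:blown-up1} for every $\lambda$ in the window using the upper constraint $\O \le \pi s/(\tau\Delta)$, then build $I$ by excising a small ``bad'' set via a union bound over the at most $\binom{s}{2}$ non-cluster pairs. The only internal difference is in the per-pair measure estimate — the paper packages it as a probability bound $\nu\{\gamma\le\alpha\pi\}\le 2\alpha$ by splitting the window into full periods plus a remainder, whereas you directly count bad centers and sum interval lengths, invoking $d\ge\rho$ and $\O\ge 4\pi s/\rho$ at the end — but the two bookkeepings yield the same constants and the identical conclusion.
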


\newcommand{\MP}{\gamma}

\begin{proof}
  We begin with \eqref{eq:blown-up1}. Let
  $\lambda\in\left[{\O \over 2s}, {\O\over s} \right]$, then
  $\lambda\tau\Delta\leq\pi$ and since
  $\|t_j-y\|_{\mathbb{T}}\leq\tau\Delta$ we immediately conclude that
  $$
  \|\lambda t_j-\lambda
  y\|_{\mathbb{T}}=\lambda\|t_j-y\|_{\mathbb{T}}\geq\lambda\Delta.
  $$

  To show \eqref{eq:blown-up2}, let $\nu$ be the uniform probability
  measure on $\left[{\O \over 2s}, {\O\over s} \right]$.
  Let $t_j\in\xvec$ and $y\in\xvec\setminus\xvec^{(j)}$ be
  fixed and put $\delta:=\|y-t_j\|_{\mathbb{T}}$. For $\lambda \in \left[{\O \over 2s}, {\O\over s} \right]$,
  let $\MP(\lambda)=\MP^{(t_j,y)}(\lambda)$ be the
  random variable on $\nu$, defined by
  $$
  \MP^{(t_j,y)}(\lambda):=\|\lambda t_j-\lambda y\|_{\mathbb{T}}.
  $$

  We now show that for any $0\le\alpha\le 1$
  \begin{equation}
    \label{eq:measure-argument-single}
    \nu\left\{\MP\left(\lambda\right) \leq \alpha\pi\right\} \leq 2\alpha.
  \end{equation}
  Since $\delta\geq\rho\geq{4\pi s\over\O}$, we can write
  ${\O\over{2s}}={2\pi\over\delta}\left(n+\zeta\right)$ where
  $n\geq 1$ is an integer and $0\leq\zeta<1$. We break up the
  probability \eqref{eq:measure-argument-single} as follows:
  \begin{align}
    \begin{split}\label{eq:split-probability}
      \nu\left\{\MP\left(\lambda\right) \leq \alpha\pi\right\} &= \sum_{k=1}^n\nu\left\{\MP\left(\lambda\right) \leq \alpha\pi \Biggl| \lambda-{\O\over{2s}}\in{2\pi\over\delta}\left[k-1,k\right] \right\} \nu\left\{\lambda-{\O\over{2s}}\in{2\pi\over\delta}\left[k-1,k\right]\right\}\\
      & \qquad + \nu\left\{\MP\left(\lambda\right) \leq \alpha\pi \Biggl| \lambda-{\O\over{2s}}\in{2\pi\over\delta}\left[n,n+\zeta\right] \right\} \nu\left\{\lambda-{\O\over{2s}}\in{2\pi\over\delta}\left[n,n+\zeta\right]\right\}.
    \end{split}
  \end{align}
  Now, consider the number $a=y-t_j$. As $\lambda$ varies between
  ${\O\over{2s}}+{2(k-1)\pi\over\delta}$ and
  ${\O\over{2s}}+{2k\pi\over\delta}$, the number $\exp(\imath
  \lambda a)$ traverses the unit circle exactly once, and therefore
  the variable $\MP(\lambda)$ traverses the interval $[0,\alpha\pi]$
  exactly twice. Consequently,
  $$
  \nu\left\{\MP\left(\lambda\right) \leq \alpha\pi \Biggl|
    \lambda-{\O\over{2s}}\in{2\pi\over\delta}\left[k-1,k\right]
  \right\} = \frac{2\alpha\pi}{2\pi}=\alpha.
  $$

  Similarly, when $\lambda$ varies between
  ${\O\over{2s}}+{2\pi n\over\delta}$ and
  ${\O\over{2s}}+{2\pi(n+\zeta)\over\delta}$,
  we have
  $$
  \nu\left\{\MP\left(\lambda\right) \leq \alpha\pi \Biggl|
    \lambda-{\O\over{2s}}\in{2\pi\over\delta}\left[n,n+\zeta\right]
  \right\} \leq \frac{2\alpha\pi}{2\pi\zeta}\leq{\alpha\over\zeta}.
  $$
  Overall,
  $$
  \nu\left\{\MP\left(\lambda\right) \leq \alpha\pi\right\} \leq
  \alpha{n\over{n+\zeta}}+{\alpha\over\zeta}\frac{\zeta}{n+\zeta}=\alpha{n+1\over{n+\zeta}}\le 2\alpha, $$
  proving \eqref{eq:measure-argument-single}.

  It is clear from the above that
  $\left\{\lambda: \MP\left(\lambda\right)\leq\alpha\pi\right\}$ is a
  union of intervals, each of length $2\alpha \pi$, repeating with the
  period of $\frac{2\pi}{\delta}$.  Consequently the set
  $\left\{\lambda\in \left[{\O \over 2s}, {\O\over s} \right] :
    \MP\left(\lambda\right)\leq\alpha\pi \right\}$ is a union of at
  most $\left\lceil\frac{\O}{2s}\frac{\delta}{2\pi}\right\rceil$
  intervals.  Since $\delta\leq\pi$ we have
  $\left\lceil\frac{\O}{2s}\frac{\delta}{2\pi}\right\rceil\leq\left\lceil{\O\over
      {4 s}}\right\rceil$, and so the set
  $\left\{\lambda\in \left[{\O \over 2s}, {\O\over s} \right] :
    \MP\left(\lambda\right) \leq \alpha\pi\right\}$ is a
  union of at most $ \left\lceil{\O\over {4 s}}\right\rceil $
  intervals.

  Now we put $\alpha_0={1-\xi \over s^2}$ and apply
  \eqref{eq:measure-argument-single} for every pair $\left(t_j,y\right)$ where
  $j=1,\dots,s$ and $y\in\xvec\setminus\xvec^{(j)}$. Denote
$$
J:=\bigcup_{t_j,y\in\xvec\setminus \xvec^{(j)}} \left\{\lambda \in \left[{\O \over 2s}, {\O\over s} \right] : \MP^{(t_j,y)}(\lambda)\leq\alpha_0\pi\right\},
$$
then by the union bound we obtain
  \begin{equation}\label{eq:union-bound}
      \nu\left(J\right) \leq \sum_{t_j,y} 2\alpha_0 = 2{s\choose 2}{1-\xi \over s^2} < 1-\xi.
    \end{equation}

    Fixing $I$ as the complement of the above set,
    $I=\left[{\O \over 2s}, {\O\over s} \right] \setminus J$, we have
    that $I$ is of total measure greater or equal to
    $\xi {\O\over{2s}}$, and for every $\lambda\in I$ the estimate
    \eqref{eq:blown-up2} holds. Clearly $J$ is a union of at most
    ${s^2\over 2} \left\lceil{\O\over {4 s}}\right\rceil$ intervals.
\end{proof}

Fix $\xi={1\over 2}$ and consider the set $I$ given by
\prettyref{lem:blowup-lemma}. Let us also fix a finite and positive integer
$N$, and consider the set of $2N+1$ equispaced points in
$\left[-\O,\O\right]$:
$$
P_N:=\left\{k{\O\over N}\right\}_{k=-N,\dots,N}.
$$

\begin{prop}\label{prop:finite-n-blowup}
  If $N > 2s^3 \left\lceil{\O\over {4 s}}\right\rceil$, then
  $P_N \cap I \neq \emptyset$.
\end{prop}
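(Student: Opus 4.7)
The plan is a short pigeonhole/length comparison argument. The measure bound on $I$ combined with the bound on the number of constituent intervals of $I^c$ forces $I$ to contain a subinterval that is longer than the grid spacing $\Omega/N$ of $P_N$, and any such subinterval must contain a grid point.

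More precisely, I would first specialize \prettyref{lem:blowup-lemma} to $\xi = \tfrac{1}{2}$, so that $I \subset \bigl[\tfrac{\Omega}{2s},\tfrac{\Omega}{s}\bigr]$ has total Lebesgue measure at least $\tfrac{\Omega}{4s}$, and its complement $I^c$ inside $\bigl[\tfrac{\Omega}{2s},\tfrac{\Omega}{s}\bigr]$ is a union of at most $M := \tfrac{s^2}{2}\lceil \Omega/(4s)\rceil$ intervals. Consequently $I$ itself is a union of at most $M+1$ intervals. By averaging, at least one of these subintervals $J \subset I$ has length $|J| \geq \frac{\Omega/(4s)}{M+1}$.

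Next I would use that $P_N$ is an arithmetic progression in $[-\Omega,\Omega]$ with spacing $\Omega/N$, and that $J \subset \bigl[\tfrac{\Omega}{2s},\tfrac{\Omega}{s}\bigr]$ is well inside this range (here $s\geq 1$). Therefore any closed subinterval $J$ with $|J| > \Omega/N$ must contain at least one point of $P_N$, because consecutive points of $P_N$ are at distance exactly $\Omega/N$. It suffices then to verify that the hypothesis $N > 2s^3 \lceil \Omega/(4s)\rceil$ implies $\frac{\Omega/(4s)}{M+1} > \Omega/N$, i.e.\ $N > 4s(M+1) = 2s^3\lceil \Omega/(4s)\rceil + 4s$. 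The additive slack of $4s$ is absorbed by noting that one can sharpen the count slightly (for instance, discarding at most one endpoint interval of $I$, or observing that $|J|$ need only exceed $\Omega/N$ strictly by an arbitrarily small amount since $P_N$ is a fixed discrete set not aligned with endpoints of $I^c$ in a generic way), which is enough to reach the stated threshold.

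There is no real obstacle here; the only mildly delicate point is the passage from "$I^c$ has at most $M$ intervals" to "$I$ has an interval of length $\geq \frac{|I|}{M+1}$," which is just an averaging over at most $M+1$ pieces, and the bookkeeping between the stated threshold $2s^3\lceil\Omega/(4s)\rceil$ and the threshold $4s(M+1)$ coming directly from the averaging. Everything else (structure of $I$, spacing of $P_N$) has already been assembled in \prettyref{lem:blowup-lemma}, so the proof is just a one-paragraph geometric observation.
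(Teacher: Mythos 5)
There is a genuine gap. Your averaging step ("$I$ has at most $M+1$ pieces, so one has length $\geq \frac{\Omega/(4s)}{M+1}$, which contains a grid point once $N \geq 4s(M+1)$") requires $N \geq 2s^3\lceil\Omega/(4s)\rceil + 4s$, and you correctly compute this, but the hypothesis only supplies $N > 2s^3\lceil\Omega/(4s)\rceil$. The additive slack of $4s$ is not small compared to anything: it is $4s$ full grid spacings, not an $\epsilon$, so neither of your two suggested patches closes it. The genericity remark in particular is not available, because $I^c$ is determined by an adversarial node configuration $\xvec$ and the blowup lemma; nothing prevents its endpoints from being aligned with $P_N$. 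And "discarding an endpoint interval of $I$" either leaves you with the same number of pieces or loses measure, neither of which improves the $\frac{|I|}{M+1}$ bound to $\frac{|I|}{M}$ in the worst case where all $M$ components of $I^c$ lie in the interior.

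The underlying issue is that looking only at the \emph{longest} subinterval of $I$ discards the rest of the measure of $I$, and that is exactly the $4s$ you are short. The paper instead counts grid points directly: each of the $K\leq M$ intervals of $I^c$ of length $d_j$ contains at most $d_j N/\Omega + 1$ grid points, so $I^c$ absorbs at most $\sum_j d_j \cdot \frac{N}{\Omega} + K \leq \frac{N}{4s}+K$ points, while $[\frac{\Omega}{2s},\frac{\Omega}{s}]$ contains about $\frac{N}{2s}$ points; subtracting gives $|P_N\cap I|\gtrsim \frac{N}{4s}-K$, which is positive precisely when $N > 4sK$, i.e.\ $N>2s^3\lceil\Omega/(4s)\rceil$. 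The "$+1$ per interval" edge effect appears \emph{additively} ($+K$) in this count, whereas in your argument it appears as the extra $+1$ in the denominator $M+1$, which after clearing denominators becomes the extra $+4s$. To repair your write-up you would either have to strengthen the hypothesis to $N \geq 2s^3\lceil\Omega/(4s)\rceil + 4s$, or switch to the counting argument.
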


\begin{proof}
  By \prettyref{lem:blowup-lemma}, the set $I^c$ consists of
  $K \leq {s^2\over 2} \left\lceil{\O\over {4 s}}\right\rceil$ intervals, and by \eqref{eq:union-bound} the total
  length of $I^c$ is at most ${\O\over{4s}}$. Denote the lengths of
  those intervals by $d_1,\dots,d_K$. The distance between neighboring
  points in $P_N$ is ${\O\over N}$, and therefore each interval
  contains at most ${d_j N\over\O}+1$ points. Overall, the interval
  $I^c$ contains at most
  $$
  \sum_{j=1}^K \left( {d_j N \over \O}+1\right) \leq {\O\over {4s}}{N\over \O} + K
  $$
  points from $P_N$, and since the total number of points in
  $\left[{\O\over{2s}},{\O\over s}\right]$ is at least ${N\over{2s}}$,
  we have
  $$
  \left| P_N \cap I \right| \geq {N\over {2s}}-{N\over
    {4s}}-K\geq{N\over{4s}}-{s^2\over 2} \left\lceil{\O\over {4 s}}\right\rceil > 0.
  $$
\end{proof}

\subsection{Square Vandermonde matrices}
\label{sub:square-vand}
Let $\xiv=\left(\xi_1,\dots,\xi_s\right)$ be a vector of $s$ pairwise
distinct complex numbers.  Consider the square Vandermonde matrix
\begin{equation}
  \label{eq:vand-def}
  \VV(\xiv):=
  \begin{bmatrix}
    1 & 1 & \dots & 1 \\
    \xi_1 & \xi_2 & \dots & \xi_s \\
    \xi_1^2 & \xi_2^2 & \dots & \xi_s^2 \\
    \vdots & \vdots & \ddots & \vdots \\
    \xi_1^{s-1} & \xi_2^{s-1} & \dots & \xi_s^{s-1}
  \end{bmatrix}.
\end{equation}

\begin{thm}[Gautschi, \cite{gautschi_inverses_1962}]
  For a matrix $A=(a_{i,j})\in\CC^{m\times n}$, let $\|A\|_{\infty}$ denote
  the $\ell_\infty$ induced matrix norm
  $$
  \|A\|_\infty:=\max_{1\leq i\leq m}\sum_{1\leq j \leq n}|a_{i,j}|.
  $$

  Then we have
  \begin{equation}
    \label{eq:gautschi-estimate}
    \|\VV^{-1}\left(\xiv\right)\|_\infty \leq \max_{1\leq i \leq s}\prod_{j\neq i}\frac{1+|\xi_j|}{|\xi_j-\xi_i|}.
  \end{equation}
\end{thm}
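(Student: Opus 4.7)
The plan is to identify the entries of $\VV^{-1}(\xiv)$ with the coefficients of the classical Lagrange interpolation polynomials and then bound the $\ell_\infty$ induced norm via the triangle inequality applied to these coefficients.

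First I would observe that, since $\VV$ as defined in \eqref{eq:vand-def} is the transpose of the usual Vandermonde matrix (which solves the polynomial interpolation system $p(\xi_j)=b_j$), the $j$-th row of $\VV^{-1}$ consists precisely of the coefficients of the $j$-th fundamental Lagrange polynomial
$$
L_j(x)=\prod_{i\ne j}\frac{x-\xi_i}{\xi_j-\xi_i}=\sum_{k=0}^{s-1} c_{j,k}\, x^k.
$$
This can be verified directly: for every $m=1,\dots,s$, one has $L_j(\xi_m)=\delta_{j,m}$, which, read off coefficient by coefficient, is exactly the statement that $(\VV^{-1})_{j,k+1}=c_{j,k}$.

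Next, by the very definition of $\|\cdot\|_\infty$,
$$
\|\VV^{-1}(\xiv)\|_\infty = \max_{1\le j\le s}\sum_{k=0}^{s-1}|c_{j,k}|,
$$
so the task reduces to bounding the sum of absolute values of the coefficients of $L_j$. For the monic polynomial $P_j(x):=\prod_{i\ne j}(x-\xi_i)$, the coefficients are (up to signs) the elementary symmetric polynomials of $\{\xi_i\}_{i\ne j}$, so by the triangle inequality each coefficient of $P_j$ is bounded in absolute value by the corresponding coefficient of $\prod_{i\ne j}(x+|\xi_i|)$. Evaluating the latter at $x=1$ gives
$$
\sum_{k=0}^{s-1}\bigl|\text{coeff of } x^k \text{ in } P_j\bigr|\;\le\;\prod_{i\ne j}(1+|\xi_i|).
$$
Dividing by the (constant in $x$) denominator $\prod_{i\ne j}(\xi_j-\xi_i)$ of $L_j$ yields
$$
\sum_{k=0}^{s-1}|c_{j,k}|\;\le\;\prod_{i\ne j}\frac{1+|\xi_i|}{|\xi_j-\xi_i|},
$$
and taking the maximum over $j$ produces \eqref{eq:gautschi-estimate}.

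There is no real obstacle here beyond keeping the conventions straight: the only subtle point is the identification of rows of $\VV^{-1}$ with coefficients of $L_j$, which rests on $\VV$ being the transpose of the standard ``interpolation'' Vandermonde. Once that is fixed, the bound is a one-line application of the triangle inequality to elementary symmetric polynomials.
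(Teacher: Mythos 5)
Your proof is correct. The paper states this as Gautschi's theorem and cites \cite{gautschi_inverses_1962} without reproducing an argument; your derivation — identifying the rows of $\VV^{-1}$ with the coefficient vectors of the Lagrange basis polynomials $L_j$, bounding the sum of absolute values of the numerator coefficients by evaluating $\prod_{i\neq j}(x+|\xi_i|)$ at $x=1$, and dividing by $\prod_{i\neq j}|\xi_j-\xi_i|$ — is precisely the classical argument from Gautschi's paper, so there is nothing to add beyond noting that the coefficient-vector identification you flag as the ``subtle point'' is indeed correct for the transpose convention \eqref{eq:vand-def}.
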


\begin{prop}\label{prop:vand-sing-estimate}
  Suppose that $\xiv=\left(\xi_1,\dots,\xi_s\right)$ is a vector of
  pairwise distinct complex numbers with
  $|\xi_j|=1$, $j=1,\dots,s$, and let $r\in\RR$ be arbitrary. Let
  \begin{equation}
    \label{eq:vandermonde-arbitrary-start}
    \VV\left(\xiv,r\right):=
    \begin{bmatrix}
      \xi_1^r & \xi_2^r & \dots & \xi_s^r \\
      \xi_1^{r+1} & \xi_2^{r+1} & \dots & \xi_s^{r+1} \\
      \xi_1^{r+2} & \xi_2^{r+2} & \dots & \xi_s^{r+2} \\
      \vdots & \vdots & \ddots & \vdots \\
      \xi_1^{r+s-1} & \xi_2^{r+s-1} & \dots & \xi_s^{r+s-1}
    \end{bmatrix}.
  \end{equation}

  For $1\leq j < k \leq s$, denote by $\delta_{j,k}$ the angular distance between
  $\xi_j$ and $\xi_k$:
  $$
  \delta_{j,k}:=\left| \Arg \left({\xi_j \over \xi_k}\right) \right| = \bigl|\Arg(\xi_j) - \Arg(\xi_k) \mod (-\pi,\pi] \bigr|.
  $$

  Then
  \begin{equation}
    \label{eq:vand-sigmamin}
    \sigma_{\min}\left(\VV\left(\xiv,r\right)\right) \geq {\pi^{1-s}\over\sqrt{s}} \min_{1\leq j \leq s}\prod_{k\neq j}\delta_{j,k}.
  \end{equation}
\end{prop}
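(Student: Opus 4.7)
The plan is to reduce to the standard square Vandermonde matrix, then chain together three well-known inequalities: the $\ell_2$-versus-$\ell_\infty$ norm comparison, Gautschi's bound, and the chord-to-arc comparison on the unit circle.

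First I would observe that $\VV(\xiv, r) = \VV(\xiv)\, D$, where $D = \diag(\xi_1^r,\ldots,\xi_s^r)$. Since $|\xi_j| = 1$ for all $j$, the matrix $D$ is unitary, and hence $\sigma_{\min}(\VV(\xiv, r)) = \sigma_{\min}(\VV(\xiv))$. So it suffices to prove the claimed lower bound for the standard Vandermonde matrix $\VV(\xiv)$ defined in \eqref{eq:vand-def}.

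Next I would use the identity $\sigma_{\min}(\VV(\xiv)) = 1/\|\VV^{-1}(\xiv)\|_2$ together with the standard inequality $\|A\|_2 \le \sqrt{s}\, \|A\|_\infty$ for any $s \times s$ matrix $A$, yielding
\begin{equation*}
  \sigma_{\min}(\VV(\xiv)) \;\ge\; \frac{1}{\sqrt{s}\,\|\VV^{-1}(\xiv)\|_\infty}.
\end{equation*}
Applying Gautschi's estimate \eqref{eq:gautschi-estimate} and using $|\xi_j|=1$, one obtains
\begin{equation*}
  \|\VV^{-1}(\xiv)\|_\infty \;\le\; \max_{1 \le i \le s} \prod_{j \ne i} \frac{2}{|\xi_j - \xi_i|}.
\end{equation*}

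Finally I would convert chord distances to angular distances. For unit complex numbers $\xi_j, \xi_i$ with angular separation $\delta_{j,i} \in [0,\pi]$, the law of cosines gives $|\xi_j - \xi_i| = 2\sin(\delta_{j,i}/2)$. Using the elementary inequality $\sin(x) \ge (2/\pi)x$ valid on $[0,\pi/2]$ with $x = \delta_{j,i}/2$, we get $|\xi_j - \xi_i| \ge (2/\pi)\delta_{j,i}$, and so $2/|\xi_j-\xi_i| \le \pi/\delta_{j,i}$. Substituting back yields
\begin{equation*}
  \|\VV^{-1}(\xiv)\|_\infty \;\le\; \pi^{s-1} \Bigl(\min_{1 \le i \le s} \prod_{j \ne i} \delta_{j,i}\Bigr)^{-1},
\end{equation*}
and combining with the earlier $1/\sqrt{s}$ bound gives exactly \eqref{eq:vand-sigmamin}. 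There is no real obstacle here; the only subtlety is the reduction via the unitary diagonal factor at the start (which is what makes the exponent $r$ disappear) and the quantitative chord-to-arc comparison, both routine.
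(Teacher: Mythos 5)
Your proposal is correct and follows essentially the same chain of inequalities as the paper: reduce to the standard Vandermonde matrix via the unit-modulus diagonal factor, apply Gautschi's estimate, pass from $\|\cdot\|_\infty$ to $\|\cdot\|_2$ with a $\sqrt{s}$ loss, and convert chord distances to angular distances using $\sin x \ge (2/\pi)x$. The only cosmetic difference is that you remove the factor $D=\diag(\xi_j^r)$ by unitary invariance of singular values, whereas the paper removes it by noting that left multiplication by $D^{-1}$ preserves $\|\VV^{-1}\|_\infty$; the two observations are equivalent here.
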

\begin{proof}
  Clearly, the matrix $\VV\left(\xiv,r\right)$ can be factorized as
  $$
  \VV\left(\xiv,r\right) = \VV\left(\xiv,0\right) \times
  \diag\left\{\xi_1^r,\dots,\xi_s^r\right\}.
  $$
  Since $\VV\left(\xiv,0\right)=\VV\left(\xiv\right)$ as in
  \eqref{eq:vand-def}, using \eqref{eq:gautschi-estimate} we
  immediately have
  \begin{equation}
    \label{eq:vand-prop-1}
    \|\VV^{-1}\left(\xiv,r\right)\|_{\infty} \leq 2^{s-1}\max_{1\leq j\leq s}\prod_{k\neq j}|\xi_j-\xi_k|^{-1}.
  \end{equation}
  For any $|\theta|\leq{\pi\over 2}$ we have
  $$
  {2\over\pi}\left|\theta\right| \leq \sin \left|\theta\right|
  \leq \left|\theta\right|,
  $$
  and since for any $\xi_j\neq\xi_k$
  $$
  \left|\xi_j-\xi_k\right|=\left|1-{\xi_j\over\xi_k}\right|=2\sin\left|{1\over
      2}\Arg{\xi_j\over\xi_k}\right|=2\sin\left|{\delta_{j,k}\over 2}\right|,
  $$
  we therefore obtain
  \begin{equation}\label{eq:angular.to.euclidean}
  {2\over\pi}\delta_{j,k} \leq \left|\xi_j-\xi_k\right|\leq
  \delta_{j,k}.
  \end{equation}

  Plugging \eqref{eq:angular.to.euclidean} into \eqref{eq:vand-prop-1} we have
  $$
  \sigma_{\max}\left(\VV^{-1}\left(\xiv,r\right)\right)\leq\sqrt{s}\|\VV^{-1}\left(\xiv,r\right)\|_\infty
  \leq \sqrt{s}\pi^{s-1}\max_{1\leq j\leq s}\prod_{k\neq j}
  \delta_{j,k}^{-1},
  $$
  which is precisely \eqref{eq:vand-sigmamin}.
\end{proof}

\subsection{Proof of \prettyref{thm:main-theorem}}
\label{sub:theproof}
We shall bound $\sigma_{\min}\left(\VV_N\left(\xvec,\O\right)\right)$
defined as in \eqref{eq:discretized-vandermonde} for sufficiently
large $N$. For any subset $R\subset \left\{-N,\ldots,N\right\}$ let $\VV_{N,R}$,
be the submatrix of $\VV_N$ containing only the rows in $R$. By the
Rayleigh characterization of singular values, it is immediately
obvious that if $\left\{-N,\ldots,N\right\}=R_1 \cup \dots \cup R_P$ is any
partition of the rows of $\VV_N$ then
\begin{equation}
  \label{eq:interlacing-singvals}
  \sigma^2_{\min}(\VV_N)\geq \sum_{n=1}^P \sigma^2_{\min}(\VV_{N,R_n}).
\end{equation}

Let $I$ be the set from \prettyref{lem:blowup-lemma} for
$\xi={1\over 2}$. By \prettyref{prop:finite-n-blowup} we have that for
all $N>2s^3\left\lceil\frac{\O}{4s}\right\rceil$,
$I$ will contain a rational multiple of $\O$ of the form
$\lambda_N={\O\over N}m$ for some $m\in\mathbb{N}$.  

Consider the ''new'' nodes 
\begin{equation}\label{eq:unodes-def}
  u_{j,N}:=t_j {\O\over N}m=\lambda_N t_j, \tab j=1,\dots, s.
\end{equation}

Since $\lambda_N\in I$, we conclude by \prettyref{lem:blowup-lemma} that
for every $j=1,\dots,s$
\begin{align}
  \label{eq:unodes-1}
  \|u_{j,N}-u_{k,N}\|_{\mathbb{T}}&\geq {1\over 2s}(\Delta\O), &&\forall t_k\in \xvec^{(j)}\setminus \{t_j\}; \\
  \label{eq:unodes-2}
  \|u_{j,N}-u_{k,N}\|_{\mathbb{T}} & \geq {\pi\over 2s^2}, &&\forall t_k\in\xvec\setminus\xvec^{(j)}.
\end{align}

Since $\lambda_N\leq{\O\over s}$ it follows that $ms \leq N$.
Now consider the particular interleaving partition of the rows
$\left\{-N,\ldots,N\right\}$ by blocks $R_{-m},\dots,R_{-1},\allowbreak R_0,R_{1},\ldots,R_m$ 
of $s$ rows each, separated by $m-1$
rows between them (some rows might be left out):
\begin{align*}
  R_0 &= \left\{0,m,\dots,(s-1)m\right\}, \\
  R_1 &= \left\{1,m+1,\dots,(s-1)m+1\right\}, \\
  R_{-1} &= \left\{-1,-m-1,\dots,-(s-1)m-1\right\}, \\
      & \dots \\
  R_{m-1} &= \left\{m-1,2m-1,\dots,sm-1\right\}, \\
  R_{-m+1} &= \left\{-m+1,-2m+1,\dots,-sm+1\right\}. \\
\end{align*}
For $n=-m+1,\ldots,m-1$, each $\VV_{N,R_n}$ is a square
Vandermonde-type matrix as in \eqref{eq:vandermonde-arbitrary-start},
$$
\VV_{N,R_n}={1\over\sqrt{2N}}\VV\left(\xiv,n\right),
$$
with node vector
$$
\xiv=\left\{e^{\imath u_{j,N}}\right\}_{j=1}^s,
$$
where $u_{j,N}$ are given by \eqref{eq:unodes-def}. We apply
\prettyref{prop:vand-sing-estimate} with the crude bound obtained from
\eqref{eq:unodes-1} and \eqref{eq:unodes-2} above:
$$
\min_{1\leq j \leq s}\prod_{k\neq j}\delta_{j,k} \geq {1\over 2^{s-1}s^{2s-2}} \left(\Delta\O\right)^{\ell-1}
$$
and obtain
$$
\sigma_{\min}\left(\VV_{N,R_n}\right) \geq {\Cl{aux1}(s)\over\sqrt{2N}} \left(\Delta \O\right)^{\ell-1},\qquad
\Cr{aux1}(s):=\frac{1}{(2\pi)^{s-1}s^{2s-2}\sqrt{s}}.
$$
Now we use \eqref{eq:interlacing-singvals} to aggregate the bounds on $\sigma_{\min}$ for 
each square matrix $\VV_{N,R_n}$ and obtain  
$$
\lambda_{\min}\left(\VV_N^H \VV_N \right) = \sigma^2_{\min}\left(\VV_N\right) \geq (2m-1)
\frac{\Cr{aux1}^2}{2N}\left(\Delta\O\right)^{2\left(\ell-1\right)}. 
$$
Since $m=\frac{\lambda_{N} N}{\O} \ge \frac{\O N}{2s \O} = \frac{N}{2s}$ 
and since by assumption $N>2s^3$, we have that ${2m-1 \over {2N}} \ge \frac{1}{4s}$ and so
$$
\sigma^2_{\min}\left(\VV_N\right) \geq 
\frac{\Cr{aux1}^2}{4s}\left(\Delta\O\right)^{2\left(\ell-1\right)}. 
$$

This proves \eqref{eq:main-bound-finite-n-g} and
\eqref{eq:main-bound-thm} with
\begin{equation}\label{eq:main-c-n-def}
  \Cr{main-c-n}(s):=\frac{1}{2(2\pi)^{s-1}s^{2s-1}}.
\end{equation}

\subsection{Proof of \prettyref{thm:optimality}}
\label{sub:optimality}

Let $\ell,s,\Delta,\O$ be fixed, with $\Delta\O<\eta$, where $\eta$
will be specified during the proof below, and
$\Delta<{\pi\over{2\left(\ell-1\right)}}$. We shall exhibit a
$\left(\Delta,\rho',s,\ell,\tau'\right)$-clustered configuration
$\xvec_{\min}$ with certain $\rho',\tau'$, such that
\begin{equation}
  \label{eq:opt-toprove}
  \lambda_{\min}\left(\GG\left(\xvec_{\min},\O\right)\right)  \leq \Cr{upper}\cdot
  \left(\Delta\O\right)^{2\left(\ell-1\right)},
\end{equation}
for some constant $\Cr{upper}=\Cr{upper}\left(\ell\right)$.

Define $\xvec_{\ell,\Delta}=\{t_1,\dots,t_{\ell}\}$ to be the vector
of $\ell$ equispaced nodes separated by $\Delta$,
i.e. $t_j=j\Delta,\;j=1,\dots,\ell$. Let
$\GG^{(\ell,\ell)}=\GG\left(\xvec_{\ell,\Delta},\O\right)$ be the
corresponding $\ell\times\ell$ prolate matrix.

\begin{prop}
  There exists an absolute constant $0<\eta_1\ll 1$ and
  $\Cl{slepian-const}=\Cr{slepian-const}\left(\ell\right)$ such that
  whenever $\O\Delta\leq\eta_1$, we have
  \begin{equation}\label{eq:opt-1}
    \lambda_{\min}\left(\GG^{(\ell,\ell)}\right) \leq \Cr{slepian-const} \cdot \left(\O\Delta\right)^{2\left(\ell-1\right)}.
  \end{equation}
\end{prop}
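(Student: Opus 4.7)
The plan is to derive \eqref{eq:opt-1} as a direct specialization of Slepian's asymptotic formula \eqref{eq:slepian-explicit-bound}, which was already quoted in \prettyref{sec:known-bounds} for general equispaced configurations. The key observation is that the node vector $\xvec_{\ell,\Delta}=(t_1,\dots,t_\ell)$ with $t_j=j\Delta$ is exactly the kind of equispaced vector treated there, merely with $s$ replaced by $\ell$.

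More precisely, I would first verify that the identification $\GG^{(\ell,\ell)}=\frac{\pi}{\O\Delta}\boldsymbol{Q}(\ell,W)$, with $W:=\frac{\O\Delta}{2\pi}$, goes through verbatim. The hypothesis $\Delta<\frac{\pi}{2(\ell-1)}$ in \prettyref{thm:optimality} guarantees $t_\ell\in\left(-\frac{\pi}{2},\frac{\pi}{2}\right]$, so all pairwise wrap-around distances coincide with the ordinary Euclidean ones and no modular arithmetic disturbs the matrix entries. Plugging Slepian's asymptotic (with $s\to\ell$) into this identity and using $2\pi W=\O\Delta$ then gives
$$\lambda_{\min}\bigl(\GG^{(\ell,\ell)}\bigr)=\Cr{slepian}(\ell)\,(\O\Delta)^{2\ell-2}\bigl(1+O(\O\Delta)\bigr).$$

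Finally, I would choose $\eta_1>0$ small enough that the multiplicative error factor $1+O(\O\Delta)$ is bounded above by, say, $2$ whenever $\O\Delta\le\eta_1$, at which point \eqref{eq:opt-1} holds with $\Cr{slepian-const}(\ell):=2\,\Cr{slepian}(\ell)$.

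There is no real obstacle here: the whole proposition is routine bookkeeping around Slepian's classical result, which has already been cited in the paper. The one point worth flagging is that this proposition is only a single ingredient of the full proof of \prettyref{thm:optimality}; to complete the optimality argument one will subsequently need to embed $\xvec_{\ell,\Delta}$ as a cluster inside a larger configuration $\xvec_{\min}$ and show that the additional well-separated nodes contribute at most a higher-order perturbation to $\lambda_{\min}$, but that step lies beyond the scope of the present proposition.
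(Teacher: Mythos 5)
Your proposal is correct and matches the paper's own proof almost verbatim: both specialize Slepian's asymptotic \eqref{eq:slepian-explicit-bound} to $s=\ell$ via the identity $\GG^{(\ell,\ell)}=\frac{\pi}{\O\Delta}\boldsymbol{Q}(\ell,W)$ and then absorb the $1+O(\O\Delta)$ factor into the constant by taking $\O\Delta$ small. The only point worth being slightly more careful about (and which the paper itself also only asserts) is that the proposition claims an \emph{absolute} threshold $\eta_1$, so one should note that the implied constant in Slepian's $O(W)$ term can be controlled uniformly in $\ell$; your write-up would otherwise make $\eta_1$ depend on $\ell$.
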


\begin{proof}
  By Slepian's results \cite{slepian_prolate_1978} elaborated in
  \prettyref{sec:known-bounds}, there exists a constant $\eta'\ll 1$
  for which \eqref{eq:slepian-explicit-bound} holds for all $s$, in
  particular for $s=\ell$, whenever $W\leq \eta'$, i.e. whenever
  $\O\Delta\leq\eta_1:=2\pi\eta'$.
\end{proof}

We define $\xvec_{\min}$ to be the extension of
$\xvec_{\ell,\Delta}$ such that the remaining $s-\ell$ nodes are maximally
equally spaced between $-{\pi\over 2}$ and $0$, not including the
endpoints. Under the assumptions on $s,\ell,\Delta$ specified in
\prettyref{thm:optimality}, it is easy to check that the nodes
$t_1,\dots,t_{\ell}$ are between $0$ and ${\pi\over 2}$, while the
remaining nodes are separated at least by
\begin{equation}\label{eq:rho-particular}
  \rho':={\pi\over{2\left(s-\ell+1\right)}}.
\end{equation}
Therefore, $\xvec_{\min}$ is a particular
$\left(\Delta,\rho',s,\ell,\tau'\right)$-clustered configuration
according to \prettyref{def:partial-cluster}, with $\rho'$ given by
\eqref{eq:rho-particular} and $\tau':=\ell-1$.

It is clear that $\GG^{\left(\ell,\ell\right)}$ is a principal
submatrix of $\GG\left(\xvec_{\min},\O\right)$, and therefore we can
apply the interlacing theorem for eigenvalues of partitioned Hermitian
matrices \cite[Theorem 4.3.28]{horn_matrix_2012}. Together with
\eqref{eq:opt-1}, this concludes the proof of \eqref{eq:opt-toprove}
and of \prettyref{thm:optimality} with $\Cr{upper}=\Cr{slepian-const}$
and $\eta=\eta_1$. \qed

\subsection{Proof of \prettyref{thm:minimax}}
\label{sub:proof-minimax}

By the definition of the matrix $\GG$ and \eqref{eq:omega-l2-norm}, we
immediately obtain the following fact.

\begin{prop}\label{prop:ft-norm-via-prolate}
  For $\mu=\sum_{j=1}^s a_j \delta_{t_j}\in\grdM$,
  denote $\xvec = \xvec_{\mu} := \supp \mu = \left(t_1,\dots,t_s\right)\in \RR^{s}$
  and $\cvec = \cvec_{\mu}:=\left(a_1,\dots,a_s\right)\in\CC^s$. Then we have
  $$
  \|\widehat{\mu}\|_{2,\O}^2 = \cvec^* \GG\left(\xvec,\O\right) \cvec.
  $$
\end{prop}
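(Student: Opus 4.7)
The plan is to carry out a direct computation: expand $|\widehat{\mu}(\omega)|^2$ as a double sum, interchange sum and integral (permissible since the sum is finite), recognize the resulting integrals as values of $\sinc$, and match with the quadratic form $\cvec^* \GG \cvec$. No deep ideas are needed; the only thing to be careful about is the ordering of conjugation.

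First I would write $\widehat{\mu}(\omega) = \sum_{j=1}^s a_j \exp(\imath \omega t_j)$ and expand
$$
|\widehat{\mu}(\omega)|^2 = \widehat{\mu}(\omega)\overline{\widehat{\mu}(\omega)} = \sum_{i=1}^s \sum_{j=1}^s a_i \overline{a_j} \exp\bigl(\imath \omega (t_i - t_j)\bigr).
$$
Plugging this into the definition \eqref{eq:omega-l2-norm} of $\|\cdot\|_{2,\Omega}$ and interchanging the (finite) sum with the integral yields
$$
\|\widehat{\mu}\|_{2,\O}^2 = \sum_{i,j=1}^s a_i \overline{a_j} \cdot \frac{1}{2\Omega}\int_{-\Omega}^{\Omega} \exp\bigl(\imath \omega (t_i - t_j)\bigr) \, d\omega.
$$

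Next I would identify the inner integral with a value of $\sinc$. The paper defines $\sinc(t) = \tfrac{1}{2}\int_{-1}^1 \exp(\imath \omega t)\,d\omega$, and the change of variable $\omega \mapsto \omega/\Omega$ gives
$$
\frac{1}{2\Omega}\int_{-\Omega}^{\Omega} \exp(\imath \omega u)\, d\omega = \sinc(\Omega u)
$$
for any $u \in \RR$. Applying this with $u = t_i - t_j$ produces
$$
\|\widehat{\mu}\|_{2,\O}^2 = \sum_{i,j=1}^s a_i \overline{a_j} \,\sinc\bigl(\Omega(t_i - t_j)\bigr).
$$

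Finally, to match the form $\cvec^* \GG(\xvec,\Omega) \cvec = \sum_{i,j} \overline{a_i} a_j \sinc(\Omega(t_i - t_j))$, I would use that $\sinc$ is an even function, so $\sinc(\Omega(t_i - t_j)) = \sinc(\Omega(t_j - t_i))$; relabeling the summation indices $i \leftrightarrow j$ then converts $\sum a_i \overline{a_j} \sinc(\Omega(t_i-t_j))$ into $\sum \overline{a_i} a_j \sinc(\Omega(t_i - t_j))$, which is exactly $\cvec^* \GG(\xvec,\Omega) \cvec$ by \prettyref{def:prolate}. There is no genuine obstacle here; the only subtlety worth mentioning in writing is that the Hermitian symmetry of $\GG$ (which follows from $\sinc$ being even and real) is what lets the two orderings of conjugation coincide.
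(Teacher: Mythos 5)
Your proof is correct and is exactly the direct computation the paper has in mind when it says the proposition follows "immediately" from the definitions of $\GG$ and $\|\cdot\|_{2,\O}$: expand $|\widehat\mu|^2$, integrate term by term to get $\sinc(\O(t_i-t_j))$, and use the Hermitian symmetry of $\GG$ to match the quadratic form. Nothing to add.
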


The next result shows that for any two measures with $s$ nodes and
clusters of size $\ell$, their difference has clusters of size at most
$2\ell$, provided that the grid size is small enough. Note that it may happen
that some nodes are in the support of both measures, in which case the
difference measure will have less than $2s$ nodes, and the largest
cluster may be of size strictly smaller than $2\ell$.

\begin{lem}\label{lem:merging.two}
  Fix $s,\ell,\rho,\tau$, and let there be given $K \ge 2$. Then there
  exists $\Delta_0$ such that for all $\Delta\leq \Delta_0$ the following
  holds: for any
  $\mu_1,\mu_2 \in \grdM\left(\Delta,\rho,s,\ell,\tau\right)$ we have
  $$
  \mu_1-\mu_2 \in \grdM\left(\Delta,\rho',s',\ell',\tau'\right)
  $$
  for some $\ell'\leq 2\ell,\;s'\leq 2s$, some $\rho',\tau'$
  satisfying $\rho' = K \tau' \Delta$ and $\tau' \geq 1$.
\end{lem}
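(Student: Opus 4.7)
The plan is to merge the cluster structures of $\mu_1$ and $\mu_2$ by exhibiting a single scale at which the consecutive gaps of the combined node set split cleanly into ``within a new cluster'' and ``between new clusters.'' Set $\xvec := \supp\mu_1 \cup \supp\mu_2$ (as a set), so that $\supp(\mu_1-\mu_2) \subset \xvec$ and $s' \leq |\xvec| \leq 2s$. Since all nodes live in $(-\pi/2, \pi/2]$, the wrap-around distance coincides with the linear distance, and ordering $\xvec$ linearly gives at most $2s-1$ consecutive gaps to analyze.

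Setting $M := K(2\ell-1)$, I consider the $2s$ geometric blocks $B_j := [\Delta M^j,\, \Delta M^{j+1}]$ for $j=0,\dots,2s-1$. Since at most $2s-1$ gaps can occupy these $2s$ blocks, pigeonhole produces some $j^*$ with $B_{j^*}$ empty; I set $T := \Delta M^{j^*}$, so every gap of $\xvec$ is either strictly less than $T$ or strictly greater than $MT$. Declare the new clusters to be the maximal runs of consecutive nodes of $\xvec$ whose intermediate gaps are all $< T$. Finally set $\tau' := (2\ell-1)T/\Delta$ and $\rho' := K\tau'\Delta = MT$; the requirements $\tau' \geq 2\ell-1 \geq 1$ and $\rho' = K\tau'\Delta$ then hold by construction.

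The three clustering conditions are verified as follows. Separation between clusters is automatic: any node $y$ outside the chunk containing $t_j$ lies across a gap strictly greater than $MT$, so $\|y - t_j\|_{\mathbb{T}} > MT = \rho'$. Within a chunk of $m$ nodes, the diameter is bounded by $(m-1)T$, so once the size bound $m \leq 2\ell$ is established, the within-cluster estimate becomes $\|y - t_j\|_{\mathbb{T}} \leq (2\ell-1)T = \tau'\Delta$. The main technical step---and the only real obstacle---is a counting fact: every linear interval $I$ of length $< \rho$ satisfies $|I \cap \supp\mu_i| \leq \ell$ for each $i \in \{1,2\}$. I would prove this by contradiction: were there more than $\ell$ nodes of $\mu_i$ in $I$, the leftmost such node would have a $\mu_i$-cluster of size at most $\ell$, forcing some other node of $I \cap \supp\mu_i$ to lie outside that cluster and hence at $\|\cdot\|_{\mathbb{T}}$-distance $\geq \rho > |I|$ from the leftmost, which is impossible. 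Applied to the ambient interval of any chunk (of diameter at most $(2s-1)T$), this yields the desired bound $m \leq 2\ell$, as soon as $(2s-1)T < \rho$.

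It remains to secure this last inequality by choice of $\Delta_0$. Since $T \leq \Delta M^{2s-1}$, the requirement $(2s-1)T < \rho$ is guaranteed by taking $\Delta \leq \Delta_0 := \rho / [(2s-1)(K(2\ell-1))^{2s-1}]$, which is the $\Delta_0$ the lemma asks for. Passing from $\xvec$ to the actual support of $\mu_1 - \mu_2$ (which may be strictly smaller due to cancellations) only deletes nodes from the chunks, preserving all three conditions. Thus $\mu_1 - \mu_2$ lies in $\grdM(\Delta, \rho', s', \ell', \tau')$ with $\ell' \leq 2\ell$, $s' \leq 2s$, $\tau' \geq 1$, and $\rho' = K\tau'\Delta$, completing the proof.
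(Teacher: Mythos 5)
Your proof is correct in substance and takes a genuinely different route from the paper. The paper works in ``distance space'': for each node $t_j\in\supp\mu_1$ it forms the sets of distances $\mathcal{R}_j,\mathcal{S}_j$ to all other nodes, restricted to $[0,\rho/2]$, and pigeonholes over the geometric annuli $I_2,\dots,I_{2s+2}$ (each $\mathcal{S}_j$ hits at most two of them) to find an annulus $I_C=[a,Ka]$ free of all such distances, which then implicitly separates new clusters. You instead work in ``gap space'': sort the merged support $\xvec$, look at the $\leq 2s-1$ consecutive gaps, and pigeonhole over $2s$ geometric blocks to find an empty one, which gives an explicit threshold $T$ and lets you define the new clusters constructively as maximal runs with internal gaps $<T$. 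Your approach is arguably cleaner: the chunk picture makes it transparent what the clusters in $\mu_1-\mu_2$ actually are, and your auxiliary counting fact (any interval of length $<\rho$ contains at most $\ell$ nodes of each $\supp\mu_i$) is a tidy, reusable statement that does the same job as the paper's restriction to $[0,\rho/2]$ and its per-$\mathcal{S}_j$ counting. Both proofs give $\Delta_0 \sim \rho\, K^{-O(s)}$.

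Two small technical points worth tightening. First, with the closed blocks $B_j=[\Delta M^j,\Delta M^{j+1}]$ a gap sitting exactly at a shared endpoint occupies two blocks, so in principle $2s-1$ gaps could touch all $2s$ closed blocks and defeat the pigeonhole; use half-open blocks $[\Delta M^j,\Delta M^{j+1})$ so each gap lies in at most one block, and conclude ``every gap is $<T$ or $\geq MT$'', which still suffices since you only need $\|y-t_j\|_{\mathbb{T}}\geq\rho'=MT$. Second, \prettyref{def:partial-cluster} also requires $\tau'<\pi/\Delta$ and $\Delta\leq\|y-t_j\|_{\mathbb{T}}$ within a cluster; the former follows from $(2\ell-1)T\leq(2s-1)T<\rho\leq\pi$ and the latter is automatic since all nodes lie on the $\Delta$-grid. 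With these minor fixes the argument is complete.
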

\begin{proof}
  Let $\Delta\leq \Delta_0$ be given (with $\Delta_0$ to be determined
  below), and put $\rho_0:=\tau\Delta$.

  Consider the
  intervals $I_0,I_1,\dots,I_{2s+2}$ (see
  \prettyref{fig:merge-clusters}):
  \begin{align*}
    I_0 &= \left[0,\rho_0\right], \\
    I_1 &= \left[\rho_0,K\rho_0\right], \\
    I_2 &= \left[K\rho_0, K^2\rho_0\right], \\
    & \dots
  \end{align*}
  such that the length $a_j$ of each $I_j$ is
  \begin{align*}
    a_0 &= |I_0| = \rho_0;\\
    a_j &= |I_j| = (K-1) \cdot K^{j-1} \rho_0,\quad j \geq 1.
  \end{align*}
  One can verify that
  \begin{align*}
  a_n &= (K-1)\sum_{j=0}^{n-1} a_j, \quad n\ge 1;\\
  \sum_{j=0}^{n} a_j &= K^n \rho_0,\quad n \ge 0.
  \end{align*}

  The overall length of the $2s+3$ intervals is therefore
  $K^{2s+2}\rho_0$. From now on we assume that this quantity is at
  most ${\rho\over 2}$, which in particular means that
  $\Delta \leq \Delta_0:= {\rho\over{2\tau K^{2s+2}}}$.

  For each $t_j \in \supp \mu_1$, consider the following sets of distances:
  \begin{align*}
    {\cal R}_j &:= \{\|y-t_j\|_{\mathbb{T}}:\;y\in\supp \mu_1\} \cap
                 \left[0,{\rho\over 2}\right],\\
    {\cal S}_j &:=\{\|y-t_j\|_{\mathbb{T}}:\;y\in\supp \mu_2\} \cap
                 \left[0,{\rho\over 2}\right].
  \end{align*}
  It is obvious that
  $$
  \left(\bigcup_{j=1}^s {\cal R}_j \right) \cap
  \left(\bigcup_{j=2}^{2s+2} I_j \right) = \emptyset.
  $$
  
  Now, since $\mu_2 \in \grdM$, each one of the sets
  ${\cal S}_1,\dots,{\cal S}_s$ intersects at most two of the
  intervals $I_2,\dots,\allowbreak I_{2s+2}$.  Therefore, by the pigeonhole's
  principle (Dirichlet's principle), there exists an index
  $C\in\left\{2,\dots,2s+2\right\}$ such that
  $$
  I_C \bigcap \left(\bigcup_{j=1}^{s} {\cal S}_{j}\right) = \emptyset.
  $$
  Clearly we also have
  $I_C \bigcap \left(\bigcup_{j=1}^{s} {\cal R}_{j}\right) =
  \emptyset$. Put $ I_C = [a,b]$. The proof is finished by taking
  $\tau':={a\over\Delta} \geq {K\rho_0\over\Delta}=\tau\geq 1$ and
  $\rho':=b = Ka = K\tau'\Delta$.
\end{proof}

\begin{figure}
  \centering
  \includegraphics[width=\linewidth]{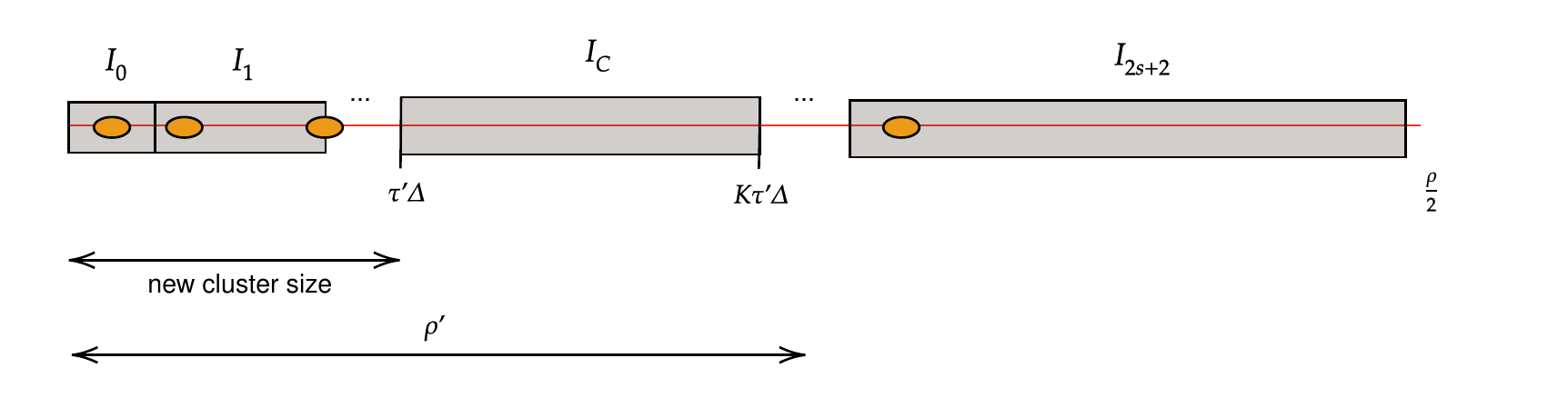}
  \caption{Merging $\mu_1$ and $\mu_2$ - see
    \prettyref{lem:merging.two}. The orange circles are the sets
    ${\cal S}_j$, $j=1,\dots,s$.}
  \label{fig:merge-clusters}
\end{figure}

\begin{proof}[Proof of upper bound] Let $s,\ell,\tau,\rho$ and
  $\varepsilon$ be fixed. Put $K:=8 s M$ and let $\Delta_0$ be as
  specified in \prettyref{lem:merging.two}. Let $\Delta\leq\Delta_0$,
  and put $\grdM = \grdM\left(\Delta,\rho,s,\ell,\tau\right)$ as in
  \prettyref{def:R-class}.

  Since the set $\grdM$ is finite, it is clearly possible to enumerate
  all its elements.  To prove the upper bound for the minimax error rate $\minmax$, consider
  the following estimator (clearly realizable, but computationally
  intractable) function
  $\widetilde{\mu}_0=\mu_{\grdM,\O,\varepsilon}:L_2\left([-\O,\O]\right)
  \to \grdM$:
$$
\widetilde{\mu}_0 \left(\Phi\right) := \left\{\textrm{the first }
\mu\in\grdM \quad \textit{ subject to } \|\Phi -
\widehat{\mu}\|_{2,\O} \leq \varepsilon\right\}.
$$

Given $\mu\in\grdM$ and $e \in L_2\left([-\O,\O]\right)$ with
$\|e\|_{2,\O} \leq \varepsilon$, let $\widetilde{\mu}_0 =
\widetilde{\mu}_0\left(\Phi_{\mu,e}\right)$ where $\Phi_{\mu,e}$ is
given by \eqref{eq:measurement-fun}. Then, since
$\|\Phi_{\mu,e}-\widehat{\mu}\|_{2,\O} = \|e\|_{2,\O} \leq
\varepsilon$ and also by the definition of $\widetilde{\mu}_0$, we
have
$$
\|\widehat{\widetilde{\mu}}_0 - \widehat{\mu}\|_{2,\O} \leq
2\varepsilon.
$$

Denote $\mu_2 := \widetilde{\mu}_0 - \mu$. By
\prettyref{lem:merging.two}, we get that
$\mu_2 \in \grdM\left(\Delta,\rho',s',\ell',\tau'\right)$ where
$s'\leq 2s$, $\ell'\leq 2\ell$, $\tau' \geq 1$ and
$\rho'=8 s M \tau'\Delta$. In particular, $\rho' > 4\tau'\Delta$, and
therefore by applying \prettyref{thm:main-theorem} we obtain that for all $\O$
satisfying
$$
{\pi s'\over{2sM\tau'\Delta}}={4\pi s'\over{\rho'}} \leq \O \leq {{\pi s'}\over{\tau'\Delta}},
$$
it holds that
$$
	\sqrt{\lambda_{\min}\left(\GG\left(\supp \mu_2,\O\right)\right)}\ge \Cr{main-c-n}(s') \left(\O \Delta\right)^{\ell'-1}. 
$$
In particular, for
${\pi\over {\Omega \Delta}}:=\alpha=M\tau' {2s\over s'} \geq M$, we 
have, using the above and \prettyref{prop:ft-norm-via-prolate}, that
\begin{align*}
  2\varepsilon \geq \|\widehat{\mu}_2\|_{2,\O} &\geq
\sqrt{\lambda_{\min}\left(\GG\left(\supp \mu_2,\O\right)\right)}
\|\mu_2\|_2 \\
  &\geq \Cr{main-c-n}(s') \left(\O \Delta\right)^{\ell'-1}\|\mu_2\|_2 \\
  &\geq \Cr{main-c-n}(2s) \left(\O \Delta\right)^{2\ell-1}\|\mu_2\|_2
\end{align*}
(here we also used that the constant $\Cr{main-c-n}(s)$ is
decreasing with $s$ and $\Delta\O < 1$).
This in turn proves that $\minmax \leq {2\over {\Cr{main-c-n}(2s) \pi^{2\ell-1}}}\srf^{2\ell-1}
\varepsilon$.
\end{proof}

\begin{proof}[Proof of the lower bound]
  Let $\eta$ be the constant from \prettyref{thm:optimality}, and put
  $\beta:={\pi\over\eta}$. Now suppose that $\alpha:=\srf>\beta$, that
  is, ${\Omega\Delta} < \eta$. Applying \prettyref{thm:optimality}
  with $2s,2\ell$ we obtain $\rho',\tau'$ and the minimal
  configuration $\xvec_{\alpha}$. Let $\cvec_{\alpha}\in\CC^{2s}$
  denote the corresponding minimal eigenvector of
  $\GG\left(\xvec_{\alpha},\O\right)$, with the normalization
  \begin{equation}\label{eq:min-eig-normalization}
\|\cvec_{\alpha}\|_2^2 = \frac{\varepsilon^2}{\Cr{upper}
\alpha^{2(2\ell-1)}}.
\end{equation}

Let $\mu$ be the measure defined by $\xvec_{\alpha},\cvec_{\alpha}$,
which therefore satisfies
$\mu \in \grdM\left(\Delta,\rho',2s,2\ell,\tau'\right)$.  By
\prettyref{prop:ft-norm-via-prolate}, \eqref{eq:min-eig-normalization}
and \prettyref{thm:optimality} we have
  $$
  \|\widehat{\mu}\|_{2,\O}^2 = \lambda_{\min}\left(\GG\left(\xvec_{\alpha},\O\right)\right) \|\cvec_{\alpha}\|_2^2 \leq \varepsilon^2.
  $$

  Clearly it is possible to write $\mu = \mu_1 - \mu_2$ where
  $\mu_1,\mu_2 \in \grdM\left(\Delta,\rho',s,\ell,\tau'\right)$. Let
  the measurement function $\Phi$ be such that
  $\Phi(\w):=\widehat{\mu}_2(\w),\;\w\in\left[-\O,\O\right]$, and let
  $\widetilde{\mu} = \widetilde{\mu}(\Phi) \in
  \grdM\left(\Delta,\rho',s,\ell,\tau'\right)$. Clearly $\Phi =
  \Phi_{\mu_1,-\widehat{\mu}}=\Phi_{\mu_2,0}$ (as per
  \eqref{eq:measurement-fun}), while also
\begin{align*}
\frac{\varepsilon}{\Cr{upper}^{1\over 2} \alpha^{2\ell-1}} & = \|\cvec_{\alpha}\|_2 = \|\mu\|_2 = \|\mu_1-\mu_2\|_2 \\
& \leq \|\mu_1-\widetilde{\mu}\|_2 + \|\mu_2 - \widetilde{\mu}\|_2 \\
& \leq 2\max\left(\|\mu_1-\widetilde{\mu}\|_2, \|\mu_2-\widetilde{\mu}\|_2\right),
\end{align*}
which shows \eqref{eq:new-minimax-lower} with $C_{\ell} = {1\over 2}
\Cr{upper}^{-{1\over 2}}$.
\end{proof}

\section{Numerical experiments}
\label{sec:Numerical-evidence}

In order to validate the bounds of \prettyref{thm:main-theorem} and
\prettyref{thm:optimality}, we computed
$\lambda_{\min}\left(\GG\right)$ for varying values of
$\Delta,\O,\ell,s$ and the actual clustering configurations. As
before, we put $\srf:={\pi\over{\Delta\O}}$. We checked two clustering
scenarios:
\begin{enumerate}
\item[\bf{C1}] A single equispaced cluster of size $\ell$ in
  $\left[\Delta,\ell\Delta\right]$, with the rest of the nodes equally
  spaced and maximally separated in $\left(-{\pi\over
      2},0\right)$. For example, in the case $s=8,\;\ell=4$ (as in
  \prettyref{subfig:c1-schem}) we have $t_j=j\Delta$ for
  $j=1,\dots,4$, and $t_j=-{\pi\over 2} + (j-4){\pi\over 10}$ for
  $j=5,\dots,8$.
\item[\bf{C2}] Split the $s$ nodes into two groups, and construct two
  single-clustered configurations as follows:
  \begin{enumerate}
  \item $s_1=\left\lfloor{s\over 2}\right\rfloor$ nodes, a single
    equispaced cluster of size $\ell_1=\ell$ in
    $\left[\Delta,\ell \Delta\right]$, and the rest of the
    $s_1-\ell_1$ nodes maximally separated and equally spaced in
    $\left(\ell\Delta,{\pi\over 2}\right)$;
  \item $s_2=s-s_1$ nodes, a single equispaced cluster of size
    $\ell_2=\ell$ in
    $\left[-{\pi\over 2}+\Delta,-{\pi\over 2}+\ell\Delta\right]$, and
    the rest of the $s_2-\ell_2$ nodes maximally separated and
    equally spaced in $\left(-{\pi\over 2}+\ell\Delta, 0\right)$.
  \end{enumerate}
  For example, in the case $s=5,\;\ell=2$ (as in
  \prettyref{subfig:c2-schem}) we have $t_1=\Delta,\;t_2 = 2\Delta$ and
  $t_3=-{\pi\over 2}+\Delta,\;t_4=-{\pi\over 2}+2\Delta,\;t_5 =
  -{\pi\over 4} + \Delta$.
\end{enumerate}

In each experiment we fixed $\ell,s$ and one of the scenarios above,
and run $n=1000$ random tests with $\Delta,\O$ randomly chosen within
appropriate ranges for each experiment. The results are presented
\prettyref{fig:decay-good}.

\begin{figure}[t]
  \centering \subfloat[$s=8,\ell=4$, 1 cluster (configuration
  {\bf{C1}}).]{\includegraphics[width=0.4\linewidth]{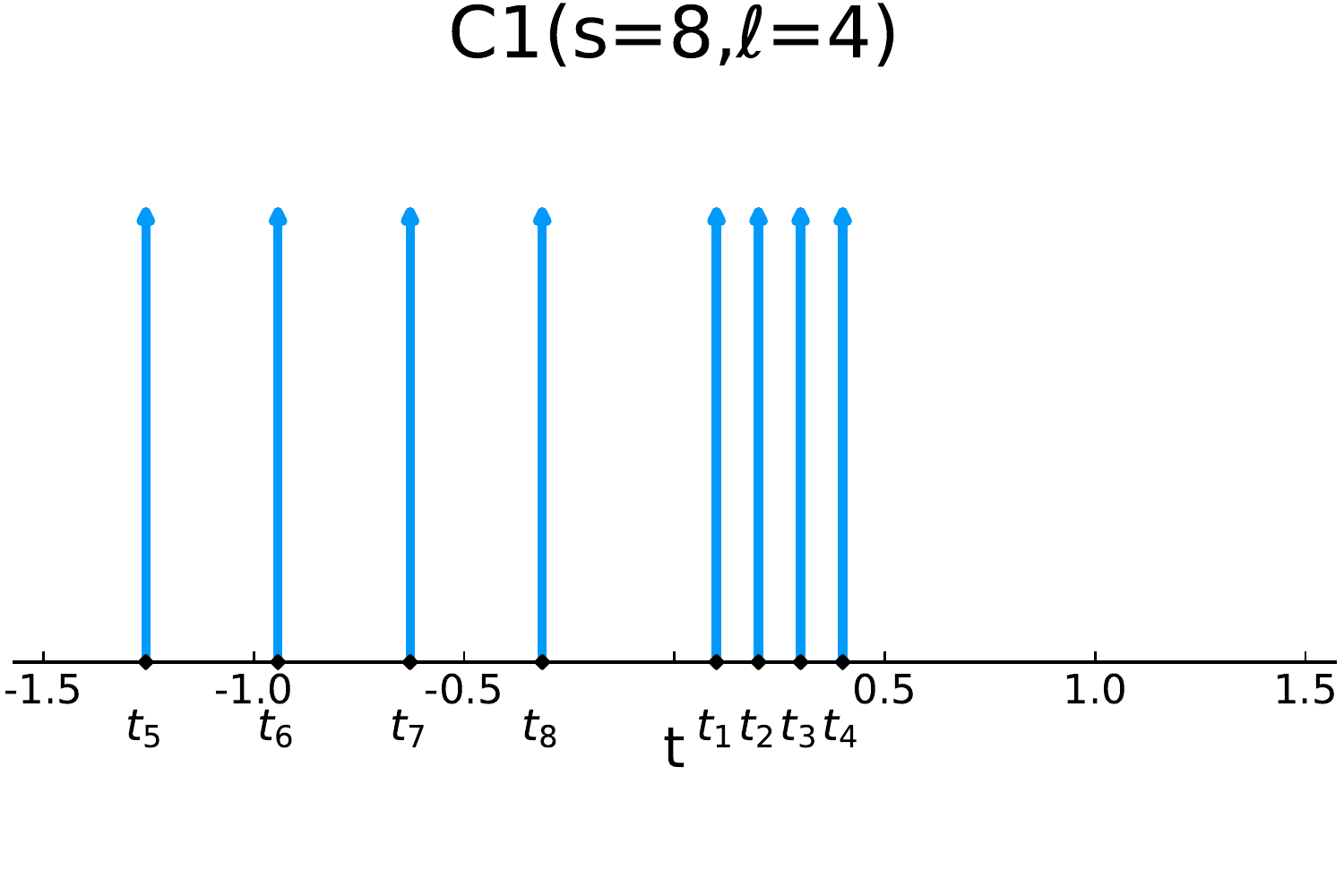}\label{subfig:c1-schem}}
  \subfloat[$s=5,\ell=2$, 2 clusters (configuration
  {\bf{C2}}).]{\includegraphics[width=0.4\linewidth]{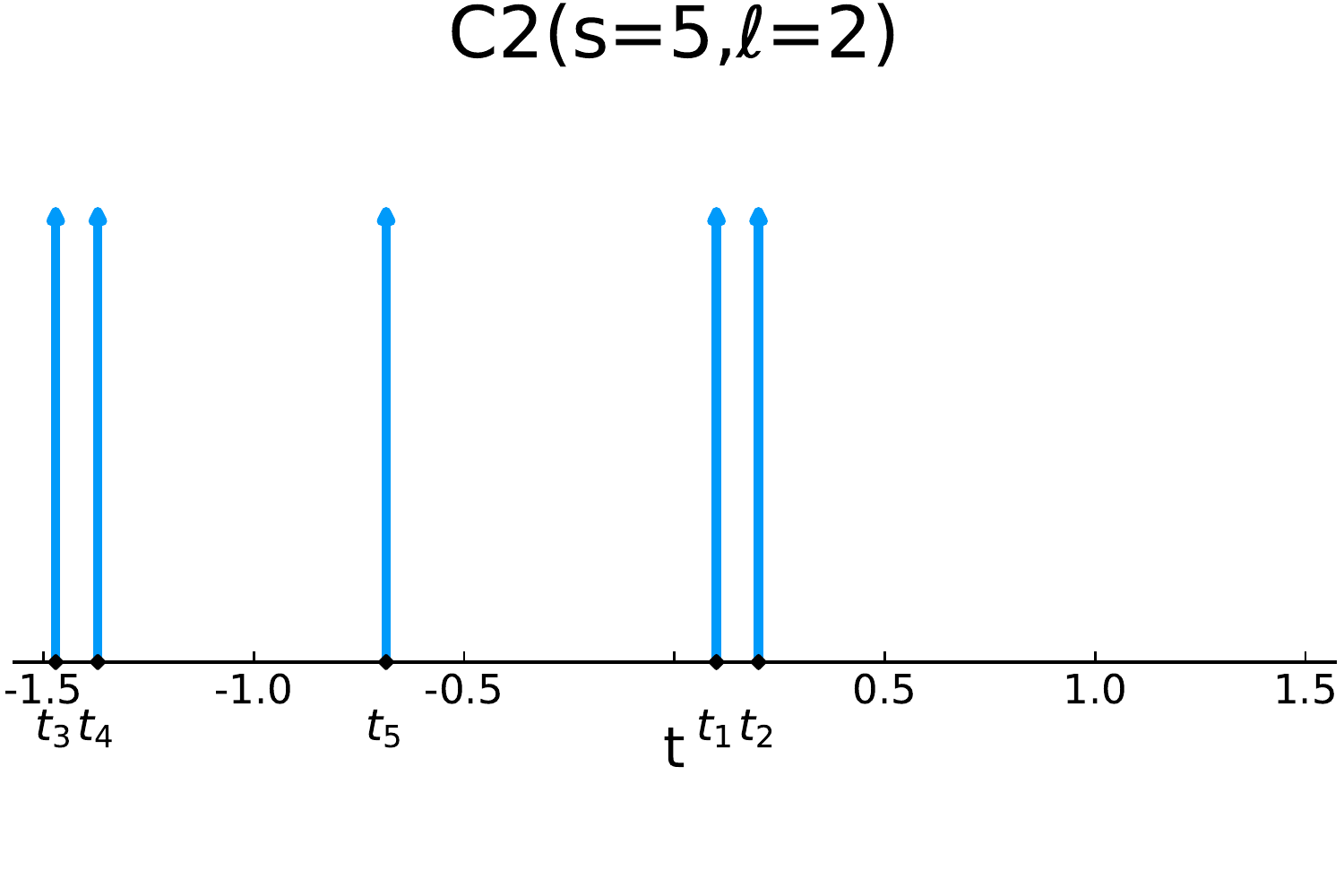}\label{subfig:c2-schem}}
  \caption{\small Examples for the configurations {\bf{C1}} and {\bf{C2}}.}
  \label{fig:configs-examples}
\end{figure}

\begin{figure}[t]
  \centering \subfloat[$s=8,\ell=4$, 1 cluster (configuration
  {\bf{C1}}).]{\includegraphics[width=0.5\linewidth]{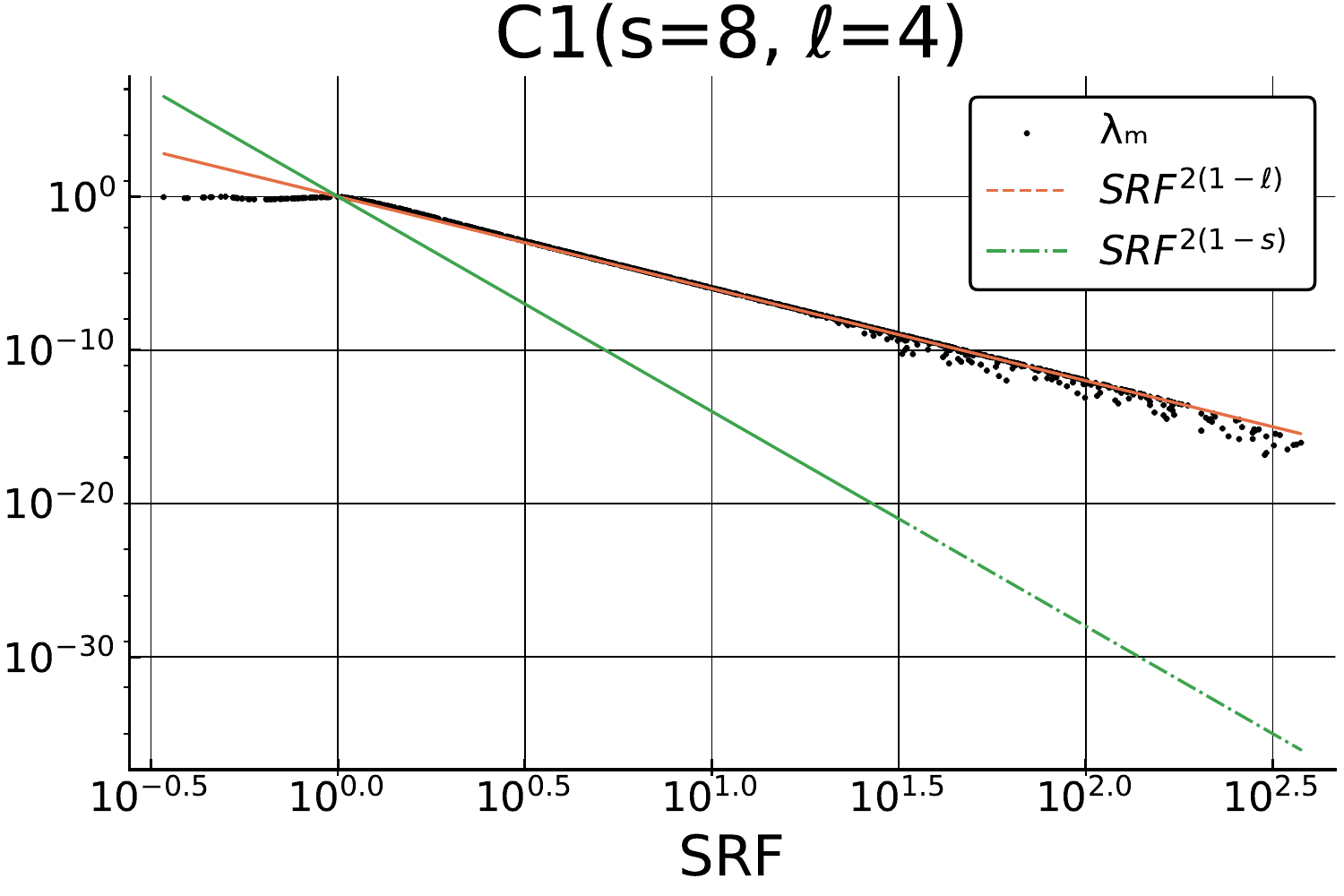}\label{subfig:c1}}
  \subfloat[$s=5,\ell=2$, 2 clusters (configuration
  {\bf{C2}}).]{\includegraphics[width=0.5\linewidth]{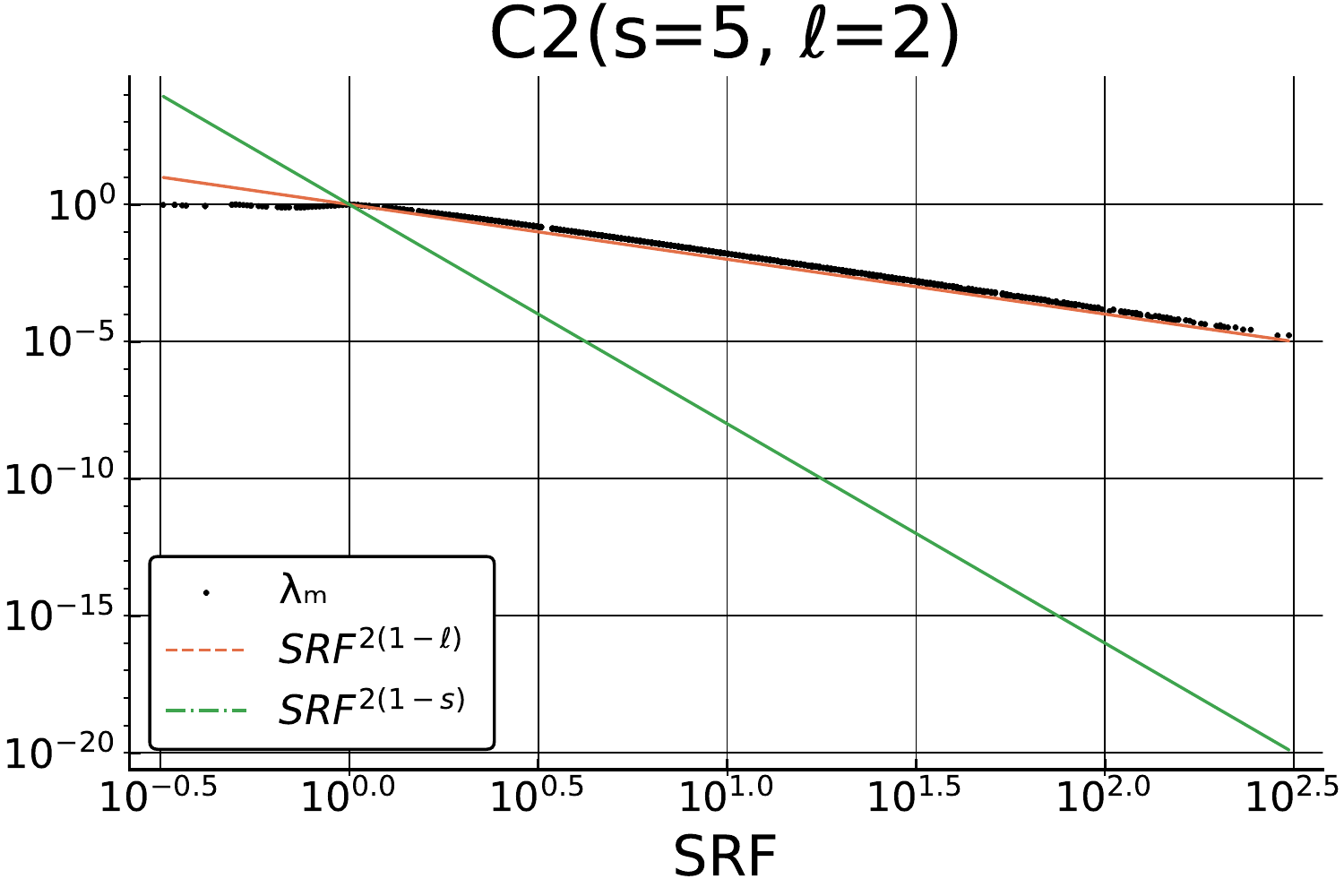}\label{subfig:c2}}
  \caption{\small Decay rate of $\lambda_{\min}$ as a function of
    $\srf$. Results of $n=1000$ random experiments with randomly
    chosen $\Delta,\O$ are plotted versus the theoretical bound
    $\srf^{2\left(1-\ell\right)}$. The curve
    $\srf^{2\left(1-s\right)}$ is shown for comparison. The bound
    stops to be accurate for $\srf< O(1)$.}
  \label{fig:decay-good}
\end{figure}

In another experiment (\prettyref{fig:breakdown}), we fixed
$\Delta,\ell,s$ and changed $\O$. As expected, when $\O$ became small
enough, the left inequality in \eqref{eq:Omega-req-th1} was violated,
and indeed we can see that in this case the asymptotic decay was
$\approx \srf^{2\left(1-s\right)}$. See
\prettyref{rem:different-confs} for further discussion.

\begin{figure}[thp]
  \begingroup \captionsetup[subfigure]{width=0.47\linewidth}
  \centering \subfloat[Configuration {\bf
    C1}.]{\includegraphics[width=0.5\columnwidth]{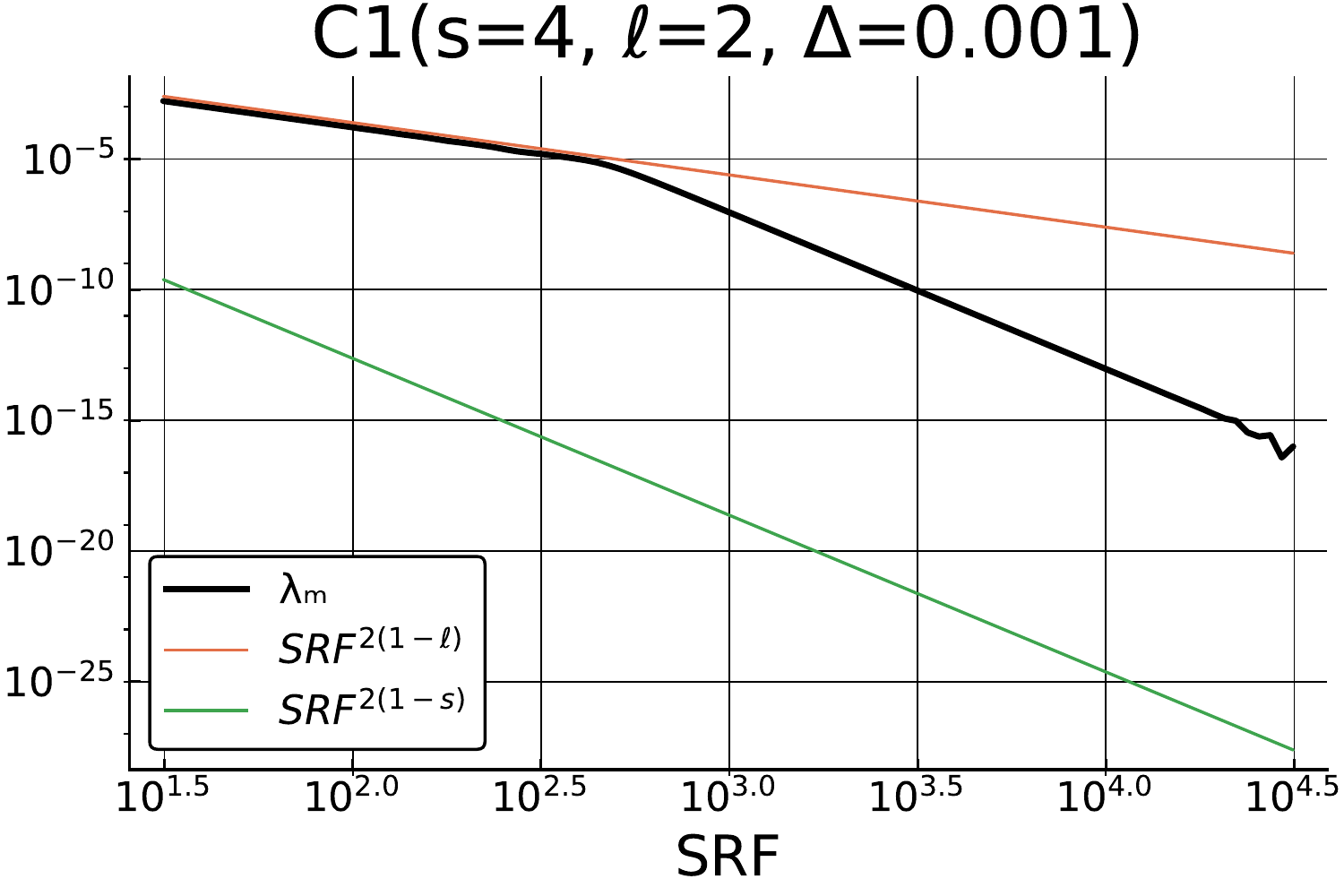}}
  \endgroup \begingroup \captionsetup[subfigure]{width=0.47\linewidth}
  \centering \subfloat[Configuration {\bf
    C2}.]{\includegraphics[width=0.5\columnwidth]{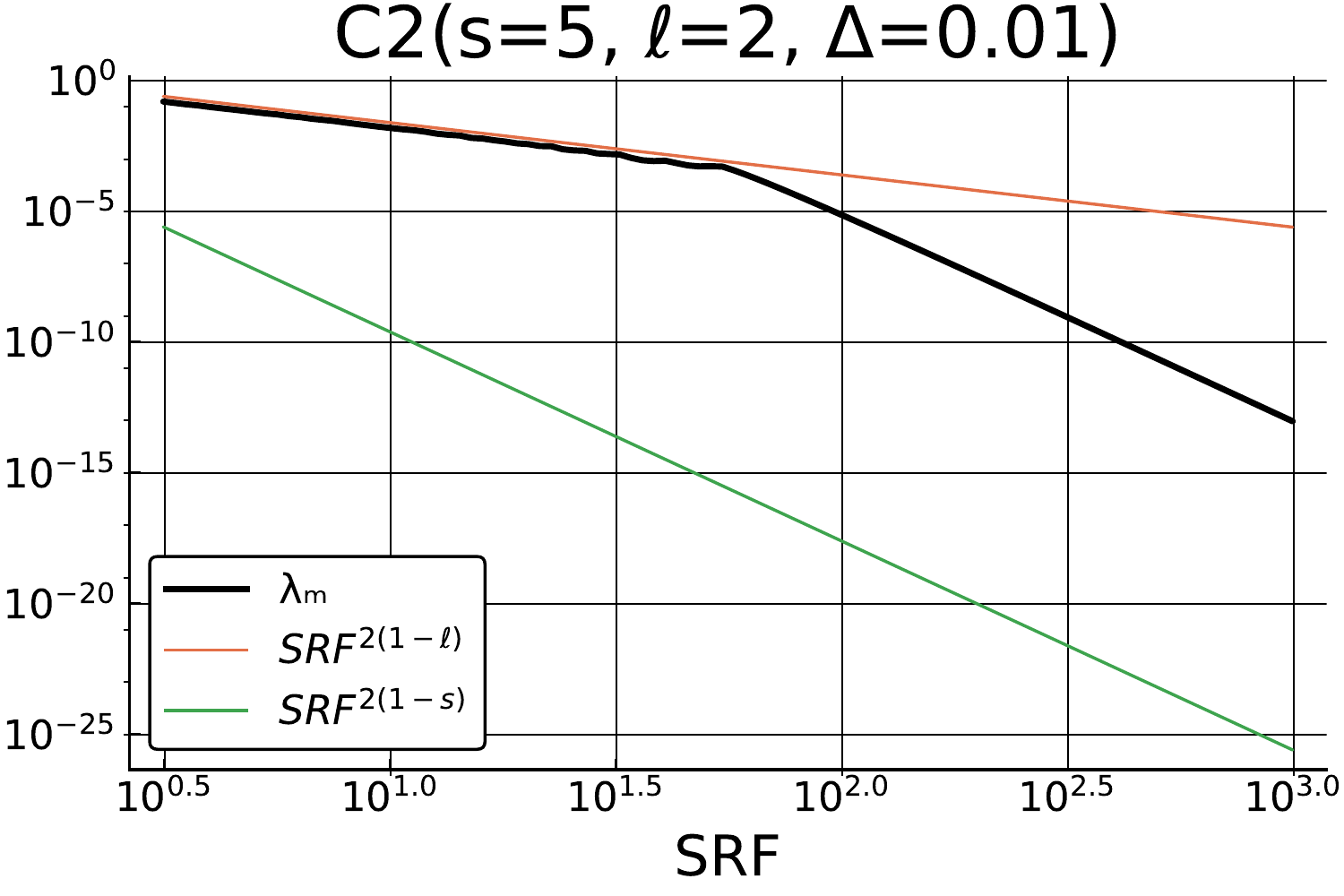}}
  \endgroup
  \caption{\small Breakdown of cluster structure. When $\O$ is small
    enough, the assumptions of \prettyref{thm:main-theorem} are
    violated for certain $\ell<s$. As a result, the decay rate of
    $\lambda_{\min}$ corresponds to the entire $\xvec$ being a single
    cluster of size $\ell=s$. $\Delta$ is kept fixed. See
    \prettyref{rem:different-confs}.}
  \label{fig:breakdown}
\end{figure}

\clearpage
\bibliographystyle{abbrv}
\bibliography{clustering-vandermonde}

\end{document}